\newcommand{\relmiddle}[1]{\mathrel{}\middle#1\mathrel{}}
\newcommand{\scalarprod}[2]{#1\cdot#2}
\begin{document}

\markboth{Ken Yamamoto}{On exceptionality of dimension three in terms of lattice angles}

%%%%%%%%%%%%%%%%%%%%% Publisher's Area please ignore %%%%%%%%%%%%%%%
%
\catchline{}{}{}{}{}
%
%%%%%%%%%%%%%%%%%%%%%%%%%%%%%%%%%%%%%%%%%%%%%%%%%%%%%%%%%%%%%%%%%%%%

\title{On exceptionality of dimension three in terms of lattice angles}

\author{Ken Yamamoto\footnote{Senbaru 1, Nishihara, Okinawa 903--0213, Japan}}

\address{Faculty of Science, University of the Ryukyus, Senbaru 1, Nishihara, Okinawa 903--0213, Japan\\
\email{yamamot@sci.u-ryukyu.ac.jp} }

\maketitle

%\begin{history}
%\received{(Day Month Year)}
%\accepted{(Day Month Year)}
%\end{history}

\begin{abstract}
We investigate the lattice angle, namely angles between two vectors with integer coordinates.
We focus on the set of angles between a fixed integer vector and other integer vectors.
For non-three-dimensional lattices, we proved that this set contains all lattice angles, irrespective of the fixed vector choice.
In contrast, for the three-dimensional lattice, we proved that this set of angles cannot cover all lattice angles, for any fixed vector.
Thus, only the three-dimensional lattice is an exception.
We further provide the condition for a given three-dimensional integer vector to intersect another integer vector at a given angle, which involves a number-theoretic property of the squared norm of the given vector and the squared tangent of the given angle.
\end{abstract}

\keywords{Lattice; Lattice angle; Quadratic form; Hilbert symbol.}

\ccode{Mathematics Subject Classification 2010: 11H06}

\section{Introduction}
The integer lattice $\mathbb{Z}^n$ is a simple and basic mathematical structure in which geometry, number theory, algebra, combinatorics, and other mathematical branches interact~\cite{Erdos, Olds}.
For example, Eisenstein's proof of quadratic reciprocity was performed by counting lattice points in a triangular region~\cite{Lemmermeyer}.
Minkowski initiated the ``geometry of numbers,'' and his theorem on convex sets has been applied in the proof of several theorems in number theory~\cite{Matousek}.
Later, Siegel and Mordell provided in-depth results for lattice or rational points on elliptic curves~\cite{Silverman}.
Currently, mathematics of lattices, including ones other than $\mathbb{Z}^n$, has attracted interest in the fields of applied mathematics, engineering, and the natural sciences,
such as cryptography~\cite{Micciancio}, computer graphics~\cite{Salomon}, and materials science~\cite{MaciaBarber}.

The mathematics of lattice polygons and polyhedra has been developed in many aspects.
Here, a lattice polygon and polyhedron are defined as a polygon and polyhedron whose vertices are all lattice points, respectively.
One of the most famous results is Pick's theorem~\cite{Beck}, which computes the area of lattice polygons in $\mathbb{R}^2$ using the number of lattice points in the interior and that on the boundary.
This theorem is used to prove Minkowski's theorem using the Farey sequence~\cite{Hardy}, and is sometimes used as teaching material in mathematics education~\cite{Johnson}.
Various extensions of Pick's theorem have been investigated, such as hexagonal~\cite{Ding1988}, triangular~\cite{Polis}, and other lattices~\cite{Ding1987}, as well as the volume of a polyhedron~\cite{Macdonald, Reeve}.
The Ehrhart polynomial~\cite{Beck} is an extension of Pick's theorem for higher-dimensional lattice objects, and it appears in the study of toric varieties~\cite{Fulton}.

Because of the discrete structure of the lattice, realizable lattice polygons and polyhedra are limited.
For lattice regular polygons in $\mathbb{Z}^n$, it is well known that only the square exists for $n=2$~\cite{Scherrer, OLoughlin}, and regular triangle, square, and regular hexagon are allowed for $n\ge3$~\cite{Schoenberg}.
As a corollary, a lattice regular polyhedron in $\mathbb{R}^3$ was proven to be either a regular tetrahedron, cube, or regular octahedron.
It was proven that if the distance between two three-dimensional lattice points is an integer, there exists a lattice cube containing these two points as vertices~\cite{Parris}.
Furthermore, the condition that there exists a regular $n$-simplex whose vertices are lattice points in $\mathbb{R}^n$ depends on the number-theoretic properties of $n$~\cite{Schoenberg}.
For example, if $n$ is even, a lattice regular $n$-simplex exists if and only if $n+1$ is a square number.
Thus, properties of lattice objects may involve the lattice dimension.

The lattice angle, formed by a fixed lattice point (the origin) and two lattice points, is a fundamental quantity in lattice geometry, and this is the main object of this study (see Fig.~\ref{fig1} for example).
To define the lattice angles in terms of vector algebra, we regard lattice $\mathbb{Z}^n$ as a subset of the Euclidean space $\mathbb{R}^n$.
We identify each lattice point $(a_1,\ldots, a_n)\in\mathbb{Z}^n$ with a vector from the origin $(0,\ldots, 0)$ to this point.
The vector corresponding to a lattice point is referred to as the ``integer vector'' in this study.
We use bold symbols, such as $\boldsymbol{a}^{(n)}$, for $n$-dimensional vectors and write as $\boldsymbol{a}^{(n)}=(a_1,\ldots, a_n)$.
In addition, we introduce the notation for the angle between two vectors $\boldsymbol{a}^{(n)}$ and $\boldsymbol{b}^{(n)}$ [$\boldsymbol{a}^{(n)}, \boldsymbol{b}^{(n)}\ne\boldsymbol{0}^{(n)}:=(0,\ldots,0)$] as follows:
\[
\angle(\boldsymbol{a}^{(n)}, \boldsymbol{b}^{(n)}):=\arccos\frac{\scalarprod{\boldsymbol{a}^{(n)}}{\boldsymbol{b}^{(n)}}}{\|\boldsymbol{a}^{(n)}\| \|\boldsymbol{b}^{(n)}\|},
\]
where
\[
\|\boldsymbol{a}^{(n)}\|=\left(\sum_{i=1}^n a_i^2\right)^{1/2},\quad
\scalarprod{\boldsymbol{a}^{(n)}}{\boldsymbol{b}^{(n)}}=\sum_{i=1}^n a_ib_i
\]
are the Euclidean norm and scalar product, respectively.

\begin{figure}[tb!]\centering
\includegraphics[bb=0 0 104 100, clip]{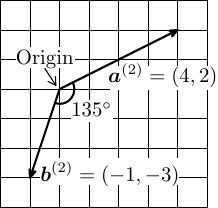}
\caption{
Example of the lattice angle in $\mathbb{Z}^2$, formed by two integer vectors $\boldsymbol{a}^{(2)}=(4,2)$ and $\boldsymbol{b}^{(2)}=(-1,-3)$.
The angle in this case is $\angle(\boldsymbol{a}^{(2)}, \boldsymbol{b}^{(2)})=3\pi/4=135^\circ$.
}
\label{fig1}
\end{figure}

Let $\Theta_n$ denote the set of the lattice angles in $\mathbb{Z}^n$.
Clearly, $\Theta_n\subseteq\Theta_m$ for any $n\le m$.
More specifically, Beeson~\cite{Beeson} determined $\Theta_n$ for each dimension $n$, as follows:
\begin{enumerate}
\item $\Theta_2=\left\{\dfrac{\pi}{2}\right\}\cup\left\{\theta\mid \tan\theta\in\mathbb{Q}\right\}$.
\item $\Theta_3=\Theta_4=\left\{\dfrac{\pi}{2}\right\}\cup\left\{\theta\relmiddle| \tan^2\theta=\dfrac{r^2+s^2+t^2}{u^2}, r,s,t,u\in\mathbb{Z}\right\}$.
\item $\Theta_5=\Theta_6=\cdots=\left\{\dfrac{\pi}{2}\right\}\cup\left\{\theta\mid \tan^2\theta\in\mathbb{Q}\right\}$.
\end{enumerate}
Thus, the relation $\Theta_2\subsetneq\Theta_3=\Theta_4\subsetneq\Theta_5=\Theta_6=\cdots$ is obtained.
According to Legendre's three-square theorem~\cite{Serre}, $\theta\in\Theta_3\setminus\{\pi/2\}$ implies that $\tan^2\theta$ does not have the form $(8m+7)x^2$, where $m$ is an integer and $x$ is a positive rational number.

Beeson's theorem for $n=3$ shows $\pi/3\in\Theta_3$ because $\tan^2(\pi/3)=3=(1^2+1^2+1^2)/1$.
In fact, one can easily find that the two vectors $\boldsymbol{a}^{(3)}=(1,1,0)$ and $\boldsymbol{b}^{(3)}=(0,1,1)$ form $\angle(\boldsymbol{a}^{(3)}, \boldsymbol{b}^{(3)})=\pi/3$.
However, for some integer vectors, e.g., $\boldsymbol{e}_1^{(3)}=(1,0,0)$ shown in the following proposition, no integer vectors can be found to make angle $\pi/3$.
\begin{proposition}
For $\boldsymbol{e}_1^{(3)}=(1,0,0)$, there exists no integer vector $\boldsymbol{v}^{(3)}$ such that $\angle(\boldsymbol{e}_1^{(3)}, \boldsymbol{v}^{(3)})=\pi/3$.
\label{prop1.1}
\end{proposition}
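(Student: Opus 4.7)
The plan is to translate the angle condition into a single Diophantine equation and then rule out nontrivial integer solutions by a modular descent. Writing $\boldsymbol{v}^{(3)} = (x,y,z)$ and using $\cos(\pi/3) = 1/2$, the defining equation $\angle(\boldsymbol{e}_1^{(3)}, \boldsymbol{v}^{(3)}) = \pi/3$ unfolds to
\[
\frac{x}{\sqrt{x^2 + y^2 + z^2}} = \frac{1}{2}.
\]
The positivity of the cosine forces $x > 0$, and squaring and clearing denominators reduces the proposition to the claim that $y^2 + z^2 = 3x^2$ has no integer solution with $x > 0$.

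The second step is infinite descent modulo $3$. Since every square in $\mathbb{Z}/3\mathbb{Z}$ lies in $\{0,1\}$, the congruence $y^2 + z^2 \equiv 0 \pmod 3$ forces both $3 \mid y$ and $3 \mid z$. Writing $y = 3y_1$, $z = 3z_1$ and simplifying yields $x^2 = 3(y_1^2 + z_1^2)$, so $3 \mid x$ as well; setting $x = 3x_1$ gives $y_1^2 + z_1^2 = 3x_1^2$, the original equation with strictly smaller $|x|+|y|+|z|$. Iterating, or appealing to a minimal solution, forces $x = y = z = 0$, contradicting $x > 0$.

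There is no real obstacle here; the argument is a miniature Fermat-style descent, with the prime $3$ supplying the modular obstruction that is specific to the choice of first vector $\boldsymbol{e}_1^{(3)}$. One might be tempted to shortcut via Beeson's description of $\Theta_3$ together with Legendre's three-square theorem, as recalled in the introduction, but those results characterize which angles occur \emph{somewhere} in $\mathbb{Z}^3$; they say nothing about realizability against a prescribed first vector. In particular $\pi/3 \in \Theta_3$, as illustrated by $(1,1,0)$ and $(0,1,1)$, so the obstruction we are proving is genuinely finer, and the direct mod-$3$ computation above is both the shortest and the most informative route.
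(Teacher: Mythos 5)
Your proof is correct, and the reduction is identical to the paper's: both translate $\angle(\boldsymbol{e}_1^{(3)}, \boldsymbol{v}^{(3)})=\pi/3$ into the Diophantine equation $v_2^2+v_3^2=3v_1^2$. The only difference lies in how that equation is killed. The paper simply observes that $3v_1^2$ (with $v_1\ne0$) has the prime $3\equiv3\pmod4$ to an odd power and invokes Fermat's two-squares theorem; you instead prove the needed instance from scratch by descent modulo $3$, using that squares mod $3$ lie in $\{0,1\}$ to force $3\mid v_2$, $3\mid v_3$, and then $3\mid v_1$. Your version is self-contained and elementary, at the cost of a few extra lines; the paper's version is shorter and situates the obstruction inside the standard sum-of-two-squares theory, which is the lens (local conditions, Hilbert symbols) that the rest of the paper generalizes. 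Your closing remark is also accurate: Beeson's theorem and the three-square theorem only govern membership in $\Theta_3$, not in $\Theta_3(\boldsymbol{a}^{(3)})$ for a fixed $\boldsymbol{a}^{(3)}$, which is precisely the distinction the proposition is meant to exhibit.
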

\begin{proof}
Suppose an integer vector $\boldsymbol{v}^{(3)}=(v_1, v_2, v_3)$ satisfies $\angle(\boldsymbol{e}_1^{(3)}, \boldsymbol{v}^{(3)})=\pi/3$; then, $3v_1^2=v_2^2+v_3^2$ is obtained.
This equation states that the natural number $3v_1^2$ is represented as the sum of two squares, but this contradicts Fermat's two-squares theorem~\cite{Mollin}.
\end{proof}
As mentioned above, even if $\theta\in\Theta_3$, not all integer vectors can form the angle $\theta$ with another integer vector.
%From this fact, the following natural questions arise.
%First, for a given $\theta\in\Theta_3$, what condition
%Second, does the similar result occur in $\mathbb{Z}^n$ for $n\ne3$?
%
To formulate this observation more precisely, we introduce the following notation.
\begin{definition}
For an $n$-dimensional integer vector $\boldsymbol{a}^{(n)}\ne\boldsymbol{0}^{(n)}$,
\[
\Theta_n(\boldsymbol{a}^{(n)}):=\{\angle(\boldsymbol{a}^{(n)}, \boldsymbol{b}^{(n)})\mid \boldsymbol{b}^{(n)}\in\mathbb{Z}^n\setminus\{\boldsymbol{0}^{(n)}\}\}
\]
as the set of angles between $\boldsymbol{a}^{(n)}$ and other integer vectors.
\end{definition}
By definition,
\[
\Theta_n(\boldsymbol{a}^{(n)})\subseteq\Theta_n
\]
holds for any $\boldsymbol{a}^{(n)}\ne\boldsymbol{0}^{(n)}$ and
\[
\Theta_n = \bigcup_{\substack{\boldsymbol{a}^{(n)}\in\mathbb{Z}^n\\ \boldsymbol{a}^{(n)}\ne\boldsymbol{0}^{(n)}}} \Theta_n(\boldsymbol{a}^{(n)})
\]
for each dimension $n$.
Moreover, we have the following main theorem.
\begin{theorem}
\begin{enumerate}
\item\label{thm1.3-1} For $n\ne3$,
\[
\Theta_n(\boldsymbol{a}^{(n)})=\Theta_n
\]
for any $\boldsymbol{a}^{(n)}\in\mathbb{Z}^n\setminus\{\boldsymbol{0}^{(n)}\}$.
That is, for each integer vector $\boldsymbol{a}^{(n)}$, any angle $\theta\in\Theta_n$ is attained as $\theta=\angle(\boldsymbol{a}^{(n)}, \boldsymbol{b}^{(n)})$ by choosing an appropriate integer vector $\boldsymbol{b}^{(n)}$.
\item\label{thm1.3-2} For $n=3$,
\[
\Theta_3(\boldsymbol{a}^{(3)})\subsetneq\Theta_3
\]
for any $\boldsymbol{a}^{(3)}\in\mathbb{Z}^3\setminus\{\boldsymbol{0}^{(3)}\}$.
That is, for each integer vector $\boldsymbol{a}^{(3)}$, there exists an angle $\theta\in\Theta_3$, such that $\theta=\angle(\boldsymbol{a}^{(n)}, \boldsymbol{b}^{(n)})$ cannot be realized for any integer vector $\boldsymbol{b}^{(n)}$.
\end{enumerate}
\label{thm1}
\end{theorem}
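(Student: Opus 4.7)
The plan is to prove the two parts separately.

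For Part \ref{thm1.3-1}, I would proceed by cases on $n$, leveraging the algebraic structures available in each dimension. The case $n=1$ is immediate since $\Theta_1=\{0,\pi\}$. For $n=2$, I identify $\mathbb{Z}^2$ with the Gaussian integers: any $\theta\in\Theta_2$ satisfies $\tan\theta=p/q$, and multiplying $\boldsymbol{a}^{(2)}$ (as a Gaussian integer) by $q+ip$ produces the integer vector $\boldsymbol{b}^{(2)}:=(qa_1-pa_2,\,pa_1+qa_2)$ with $\angle(\boldsymbol{a}^{(2)},\boldsymbol{b}^{(2)})=\arg(q+ip)=\theta$. For $n=4$, I use the analogous construction with the Lipschitz integers in $\mathbb{H}$: given $\tan^2\theta=(r^2+s^2+t^2)/u^2$, right-multiplication $\boldsymbol{a}\mapsto\boldsymbol{a}q$ by $q=u+ri+sj+tk$ preserves $\mathbb{Z}^4$, and the identity $\langle x,y\rangle=\operatorname{Re}(\bar{x}y)$ yields $\cos(\angle(\boldsymbol{a},\boldsymbol{a}q))=u/|q|$, giving the prescribed tangent squared. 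Sign choices of $u$ realize the supplementary angle.

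For $n\ge5$ I decompose $\boldsymbol{b}^{(n)}=k\boldsymbol{a}^{(n)}+\boldsymbol{c}^{(n)}$ with $\boldsymbol{c}^{(n)}\in L:=(\boldsymbol{a}^{(n)})^\perp\cap\mathbb{Z}^n$. Writing $A=\|\boldsymbol{a}^{(n)}\|^2$ and $\tan^2\theta=p/q$, the angle condition becomes $\|\boldsymbol{c}^{(n)}\|^2=k^2Ap/q$; setting $k=\ell q$ for a positive integer $\ell$ to be chosen reduces the task to realizing $\ell^2qAp$ as a squared norm in the rank-$(n-1)$ lattice $L$. Since $n-1\ge4$, I would exhibit four elements of $L$ (starting from the vectors $a_i\boldsymbol{e}_j-a_j\boldsymbol{e}_i$, which automatically lie in $L$, then orthogonalizing over $\mathbb{Z}$ after clearing denominators) whose Gram matrix is a positive scalar multiple of the identity; Lagrange's four-square theorem then produces the required $\boldsymbol{c}^{(n)}$, with $\ell$ absorbing the common scalar. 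This is the main technical step of Part \ref{thm1.3-1}.

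For Part \ref{thm1.3-2} I may assume $\boldsymbol{a}^{(3)}$ is primitive (angles are scale-invariant) and use the three-dimensional identity $\tan^2(\angle(\boldsymbol{a},\boldsymbol{b}))=\|\boldsymbol{a}\times\boldsymbol{b}\|^2/(\boldsymbol{a}\cdot\boldsymbol{b})^2$. A covolume computation (completing $\boldsymbol{a}$ to a $\mathbb{Z}$-basis of $\mathbb{Z}^3$) shows that $\boldsymbol{a}\times\boldsymbol{b}$ sweeps out all of $L:=\boldsymbol{a}^\perp\cap\mathbb{Z}^3$ as $\boldsymbol{b}$ varies over $\mathbb{Z}^3$, so every realizable $\tan^2\theta$ lies in $\{\|\boldsymbol{c}\|^2/u^2:\boldsymbol{c}\in L,\ u\in\mathbb{Z}\setminus\{0\}\}$. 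The quadratic form $Q_L$ defined by $Q_L(\boldsymbol{c})=\|\boldsymbol{c}\|^2$ for $\boldsymbol{c}\in L$ is a positive-definite binary form whose Gram determinant equals $A$; by Gauss's theory of binary quadratic forms, such a form omits infinitely many primes from its image. I would select a prime $p\not\equiv7\pmod8$ lying outside the genus of $Q_L$: the congruence guarantees by Legendre's three-square theorem that $p=r^2+s^2+t^2$, so $\tan^2\theta=p$ realizes some $\theta\in\Theta_3$; and a short reduction of any representation $Q_L(\boldsymbol{c})=u^2p$ to a primitive representation of $p$ itself (using that $[u^2p]=[p]$ in the genus group) shows that no $u^2p$ is in the image of $Q_L$, so $\theta\notin\Theta_3(\boldsymbol{a}^{(3)})$. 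Existence of such $p$ follows from Dirichlet's theorem combined with Chebotarev applied to the ring class field attached to $Q_L$.

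The hardest step is the $n\ge5$ case of Part \ref{thm1.3-1}: the orthogonal lattice $L$ has a non-diagonal Gram form depending on $\boldsymbol{a}^{(n)}$, so producing vectors in $L$ of prescribed squared norm demands careful use of the freedom in $\ell$ together with an explicit integer orthogonalization. In Part \ref{thm1.3-2}, the subtlety is verifying uniformly in primitive $\boldsymbol{a}^{(3)}$ that a prime meeting both the genus condition and the congruence $p\not\equiv7\pmod8$ exists, which amounts to showing that the genus of $Q_L$ can never exhaust the residues modulo $8$ compatible with the three-square theorem.
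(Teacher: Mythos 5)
Your constructions for $n=1,2,4$ are correct and essentially identical to the paper's (the explicit $\boldsymbol{b}^{(4)}$ in the paper is exactly right-multiplication by the Lipschitz quaternion $u+ri+sj+tk$, and Eq.~(2.1) is Gaussian-integer multiplication). The case $n\ge5$, however, contains a genuine gap. You propose to exhibit four elements of $L=(\boldsymbol{a}^{(n)})^\perp\cap\mathbb{Z}^n$ whose Gram matrix is a positive scalar multiple of the identity and then invoke Lagrange's four-square theorem. Such a quadruple need not exist: four mutually orthogonal vectors of common squared norm $N$ span a rank-$4$ sublattice $\Lambda$ with $\det\Lambda=N^4$, while for $n=5$ and primitive $\boldsymbol{a}^{(5)}$ the lattice $L$ has rank $4$ and determinant $\|\boldsymbol{a}^{(5)}\|^2$, so $N^4=\|\boldsymbol{a}^{(5)}\|^2\,[L:\Lambda]^2$. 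For $\boldsymbol{a}^{(5)}=(1,1,1,0,0)$ this reads $N^4=3\,[L:\Lambda]^2$, impossible because the exponent of $3$ on the left is even and on the right is odd; so this $L$ admits no such frame, and integer Gram--Schmidt certainly does not produce one (it yields orthogonal vectors of \emph{unequal} norms, which cannot be equalized by integer scaling). The decomposition $\boldsymbol{b}=k\boldsymbol{a}+\boldsymbol{c}$ is fine, but to finish you would need something like the fact that a positive-definite rational quadratic form of rank $\ge4$ represents every positive rational (Hasse--Minkowski, using that the unique anisotropic quaternary form over $\mathbb{Q}_p$ is a quaternion norm form and hence universal). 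The paper sidesteps all of this by applying Meyer's theorem to the indefinite rank-$n$ form $(\boldsymbol{v}\cdot\boldsymbol{a}^{(n)})^2-\|\boldsymbol{v}\|^2\|\boldsymbol{a}^{(n)}\|^2\cos^2\theta$.

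For Part (2), your reduction via $\tan^2\angle(\boldsymbol{a},\boldsymbol{b})=\|\boldsymbol{a}\times\boldsymbol{b}\|^2/(\boldsymbol{a}\cdot\boldsymbol{b})^2$ and the surjectivity of $\boldsymbol{b}\mapsto\boldsymbol{a}\times\boldsymbol{b}$ onto $L$ is sound and closely parallels the paper's Proposition~\ref{prop2}; the genus characters of the binary form $Q_L$ are exactly the paper's Hilbert symbols. But the two steps you defer are the actual content. First, passing from ``$Q_L$ represents $u^2p$ for some $u$'' to a congruence obstruction on $p$ alone requires controlling $u$ at the primes dividing $2\|\boldsymbol{a}\|^2$; this is precisely what the Hilbert-symbol formalism in Lemma~\ref{lemma1} handles. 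Second, the existence of a prime $p\not\equiv7\pmod 8$ excluded by $Q_L$ is where the real difficulty sits: the paper's four-case analysis on the square-free part of $\|\boldsymbol{a}\|^2$ exists because, when $2$ appears in that square-free part, the natural excluded residue $p\equiv7\pmod 8$ is useless ($\tan^2\theta=p\notin\tan^2\Theta_3$ by the three-square theorem) and one must verify that $p\equiv5\pmod 8$ also works. As written, Part (2) is a plausible programme rather than a proof, and Part (1) for $n\ge5$ relies on a step that is false as stated.
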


From this theorem, the situation as in Proposition~\ref{prop1.1}, where $\pi/3\in\Theta_3$ but $\Theta_3(1,0,0)\not\ni\pi/3$, occurs only in the three-dimensional lattice.
That is, the property of $\Theta_n(\boldsymbol{a}^{(n)})$ is completely distinct only in dimension $n=3$.
The proof of this theorem is provided in the next section.
From Theorem~\ref{thm1}(\ref{thm1.3-2}), a next natural problem is to find the (necessary and sufficient) condition for $\boldsymbol{a}^{(3)}$ such that $\Theta_3(\boldsymbol{a}^{(3)})$ contains a given $\theta\in\Theta_3$.
Likewise, the condition for $\theta\in\Theta_3$, such that $\Theta_3(\boldsymbol{a}^{(3)})$ for a given $\boldsymbol{a}^{(3)}$ contains $\theta$ is also a natural problem.
In \S\ref{sec3}, we show that the condition for $\theta\in\Theta_3(\boldsymbol{a}^{(3)})$ is associated with number-theoretic properties of $\tan^2\theta$ and $\|\boldsymbol{a}^{(3)}\|^2$, including its prime factorization.

\section{Proof of Theorem~\ref{thm1}}\label{sec2}
In this section, we provide the proof of Theorem~\ref{thm1}.
%We let $\boldsymbol{a}^{(n)}\ne\boldsymbol{0}^{(n)}$, unless otherwise specified.

First, we can easily show that $0, \pi\in\Theta_n(\boldsymbol{a}^{(n)})$ for any integer vector $\boldsymbol{a}^{(n)}\ne\boldsymbol{0}^{(n)}$, because $\angle(\boldsymbol{a}^{(n)}, \boldsymbol{a}^{(n)})=0$ and $\angle(\boldsymbol{a}^{(n)}, -\boldsymbol{a}^{(n)})=\pi$.
Furthermore, we can prove that $\pi/2\in\Theta_n(\boldsymbol{a}^{(n)})$ for any integer vector $\boldsymbol{a}^{(n)}=(a_1, a_2,\ldots, a_n)$ of any dimension $n$.
If $a_1=0$, the integer vector $(1,0,\ldots,0)$ is perpendicular to $\boldsymbol{a}^{(n)}$;
otherwise (if $a_1\ne0$), the integer vector $(-a_2, a_1,0,\ldots, 0)$ is perpendicular to $\boldsymbol{a}^{(n)}$. % 一般性を失うことなく a_1≠0 をいってもよいかも

Thus, to prove Theorem~\ref{thm1}(\ref{thm1.3-1}), we demonstrate that $\Theta_n(\boldsymbol{a}^{(n)})$ contains any $\theta\in\Theta_n$ other than $0, \pi/2$, and $\pi$.
The proof can be separated into three cases: (1-1) $n=2$, (1-2) $n=4$, and (1-3) $n\ge5$.
We note that these cases correspond to Beeson's $\Theta_n$: $\Theta_2\subsetneq\Theta_4\subsetneq\Theta_5=\Theta_6=\cdots$.

(1-1) The case $n=2$ is the simplest.
Each angle $\theta\in\Theta_2\setminus\{\pi/2\}$ has a rational tangent, so that we can write $\tan\theta=v/u$ for $u,v\in\mathbb{Z}$ and $u\ne0$.
By putting
\begin{equation}
b_1=a_1u+a_2v,\quad b_2=a_2u-a_1v,
\label{eq2.1}
\end{equation}
it is straightforward to show that $\boldsymbol{b}^{(2)}=(b_1,b_2)$ is an integer vector such that $\angle(\boldsymbol{a}^{(2)}, \boldsymbol{b}^{(2)})=\theta$.
This argument is similar to the proof by Beeson~\cite{Beeson}.

(1-2) Next, we present the case $n=4$.
Let $I_4$ be the $4\times4$ identity matrix, and introduce three orthogonal matrices as follows:
\[
U_1=\begin{pmatrix}0 & -1 & 0 & 0\\
1 & 0 & 0 & 0\\
0 & 0 & 0 & -1\\
0 & 0 & 1 & 0
\end{pmatrix},
\quad
U_2=\begin{pmatrix}0 & 0 & -1 & 0\\
0 & 0 & 0 & 1\\
1 & 0 & 0 & 0\\
0 & -1 & 0 & 0
\end{pmatrix},
\quad
U_3=\begin{pmatrix}0 & 0 & 0 & -1\\
0 & 0 & -1 & 0\\
0 & 1 & 0 & 0\\
1 & 0 & 0 & 0
\end{pmatrix}.
\]
For any four-dimensional integer vector $\boldsymbol{a}^{(4)}=(a_1, a_2, a_3, a_4)\ne(0,0,0,0)$, we set
\begin{align*}
\boldsymbol{x}_0^{(4)}&=\boldsymbol{a}^{(4)}I_4=(a_1, a_2, a_3, a_4),\\
\boldsymbol{x}_1^{(4)}&=\boldsymbol{a}^{(4)}U_1=(a_2, -a_1, a_4, -a_3),\\
\boldsymbol{x}_2^{(4)}&=\boldsymbol{a}^{(4)}U_2=(a_3, -a_4, -a_1, a_2),\\
\boldsymbol{x}_3^{(4)}&=\boldsymbol{a}^{(4)}U_3=(a_4, a_3, -a_2, -a_1).
\end{align*}
These four vectors are integer vectors having equal norm $\|\boldsymbol{x}_k^{(4)}\|=\|\boldsymbol{a}^{(4)}\|$ for $k=0$, $1$, $2$, and $3$
and form an orthogonal basis for $\mathbb{R}^4$.
Using arbitrary integers $u,r,s,t\in\mathbb{Z}$, we construct an integer vector
\begin{equation}
\boldsymbol{b}^{(4)}=u\boldsymbol{x}_0^{(4)}+r\boldsymbol{x}_1^{(4)}+s\boldsymbol{x}_2^{(4)}+t\boldsymbol{x}_3^{(4)},
\label{eq:b}
\end{equation}
i.e., $u$, $r$, $s$, and $t$ are the coordinates of $\boldsymbol{b}^{(4)}$ in the orthogonal basis $\{\boldsymbol{x}_0^{(4)}, \boldsymbol{x}_1^{(4)}, \boldsymbol{x}_2^{(4)}, \boldsymbol{x}_3^{(4)}\}$.
The angle $\theta$ between $\boldsymbol{a}^{(4)}$ and $\boldsymbol{b}^{(4)}$ satisfies 
\[
\cos\theta=\frac{\boldsymbol{a}^{(4)}\cdot\boldsymbol{b}^{(4)}}{\|\boldsymbol{a}^{(4)}\| \|\boldsymbol{b}^{(4)}\|}=\frac{u}{\sqrt{u^2+r^2+s^2+t^2}},
\]
which implies
\[
\tan^2\theta=\frac{r^2+s^2+t^2}{u^2}.
\]
Thus, $\Theta_4(\boldsymbol{a}^{(4)})$ is proven to contain each $\theta\in\Theta_4$.

In other words, for a given integer vector $\boldsymbol{a}^{(4)}\ne(0,0,0,0)$, an orthogonal basis of integer vectors with equal norm including $\boldsymbol{a}^{(4)}$ can be constructed, and the vector $\boldsymbol{b}^{(4)}$ having the coordinate $(u,r,s,t)$ in this basis satisfies $\tan^2\angle(\boldsymbol{a}^{(4)}, \boldsymbol{b}^{(4)})=(r^2+s^2+t^2)/u^2$.

\begin{remark}
For $\boldsymbol{a}^{(4)}=(1,1,1,1)$, the vector $\boldsymbol{b}^{(4)}$ in Eq.~\eqref{eq:b} is written as
\begin{equation}
\boldsymbol{b}^{(4)}=
\begin{pmatrix}
u & r & s & t
\end{pmatrix}
\begin{pmatrix}
1 & 1 & 1 & 1\\
1 & -1 & 1 & -1\\
1 & -1 & -1 & 1\\
1 & 1 & -1 & -1
\end{pmatrix}.
\label{eq:Hadamard}
\end{equation}
The $4\times4$ matrix on the right-hand side is a Hadamard matrix of order $4$~\cite{Horadam};
a Hadamard matrix is a square matrix whose entries are either $1$ or $-1$ and the row vectors are mutually orthogonal.

For general $\boldsymbol{a}^{(4)}=(a_1, a_2, a_3, a_4)$, $\boldsymbol{b}^{(4)}$ becomes
\[
\boldsymbol{b}^{(4)}=
\begin{pmatrix}
u & r & s & t
\end{pmatrix}
\begin{pmatrix}
\boldsymbol{x}_0^{(4)}\\
\boldsymbol{x}_1^{(4)}\\
\boldsymbol{x}_2^{(4)}\\
\boldsymbol{x}_3^{(4)}
\end{pmatrix}
=
\begin{pmatrix}
u & r & s & t
\end{pmatrix}
\begin{pmatrix}
a_1 & a_2 & a_3 & a_4\\
a_2 & -a_1 & a_4 & -a_3\\
a_3 & -a_4 & -a_1 & a_2\\
a_4 & a_3 & -a_2 & -a_1
\end{pmatrix}.
\]
The following four vectors
\newlength{\minusskip}\settowidth{\minusskip}{$-$}
\begin{align*}
\boldsymbol{y}_1^{(4)}&=\frac{1}{4}(a_1+a_2+a_3+a_4, -a_1+a_2+a_3-a_4, -a_1-a_2+a_3+a_4, -a_1+a_2-a_3+a_4),\\
\boldsymbol{y}_2^{(4)}&=\frac{1}{4}(a_1-a_2-a_3+a_4, \hspace{\minusskip}a_1+a_2+a_3+a_4, \hspace{\minusskip}a_1-a_2+a_3-a_4, -a_1-a_2+a_3+a_4),\\
\boldsymbol{y}_3^{(4)}&=\frac{1}{4}(a_1+a_2-a_3-a_4, -a_1+a_2-a_3-a_4, \hspace{\minusskip}a_1+a_2+a_3+a_4, \hspace{\minusskip}a_1-a_2-a_3+a_4),\\
\boldsymbol{y}_4^{(4)}&=\frac{1}{4}(a_1-a_2+a_3-a_4, \hspace{\minusskip}a_1+a_2-a_3-a_4, -a_1+a_2+a_3-a_4, \hspace{\minusskip}a_1+a_2+a_3+a_4)
\end{align*}
form an orthogonal basis with equal norm ($\|\boldsymbol{y}_k^{(4)}\|=\|\boldsymbol{a}^{(4)}\|/2$).
The vector $\boldsymbol{b}^{(4)}$ can be expressed as
\[
\boldsymbol{b}^{(4)}=
\begin{pmatrix}
u & r & s & t
\end{pmatrix}
\begin{pmatrix}
1 & 1 & 1 & 1\\
1 & -1 & 1 & -1\\
1 & -1 & -1 & 1\\
1 & 1 & -1 & -1
\end{pmatrix}
\begin{pmatrix}
\boldsymbol{y}_1^{(4)}\\
\boldsymbol{y}_2^{(4)}\\
\boldsymbol{y}_3^{(4)}\\
\boldsymbol{y}_4^{(4)}
\end{pmatrix}.
\]
The same Hadamard matrix as in Eq.~\eqref{eq:Hadamard} appears for general $\boldsymbol{a}^{(4)}$ in the basis $\{\boldsymbol{y}^{(4)}_1, \boldsymbol{y}^{(4)}_2, \boldsymbol{y}^{(4)}_3, \boldsymbol{y}^{(4)}_4\}$.

In the two-dimensional case, the vector $\boldsymbol{b}^{(2)}=(b_1, b_2)$ given by Eq.~\eqref{eq2.1} has an expression using a Hadamard matrix of order $2$:
\[
\boldsymbol{b}^{(2)}=
\begin{pmatrix}
u & v
\end{pmatrix}
\begin{pmatrix}
a_1 & a_2\\
a_2 & -a_1
\end{pmatrix}
=
\begin{pmatrix}
u & v
\end{pmatrix}
\begin{pmatrix}
1 & 1\\
1 & -1
\end{pmatrix}
\begin{pmatrix}
\boldsymbol{y}_1^{(2)}\\
\boldsymbol{y}_2^{(2)}
\end{pmatrix},
\]
where
\[
\boldsymbol{y}_1^{(2)}=\frac{1}{2}(a_1+a_2, -a_1+a_2),\quad
\boldsymbol{y}_2^{(2)}=\frac{1}{2}(a_1-a_2, a_1+a_2)
\]
are orthogonal and have equal norm $\|\boldsymbol{y}_1^{(2)}\|=\|\boldsymbol{y}_2^{(2)}\|=\|\boldsymbol{a}^{(2)}\|/\sqrt{2}$.

Therefore, the construction of an integer vector $\boldsymbol{b}^{(n)}$ that intersects at a given angle with a given integer vector $\boldsymbol{a}^{(n)}$ for $n=2$ and $4$ involves Hadamard matrices.
In the above computation, the author constructed the orthogonal basis $\{\boldsymbol{y}_1^{(4)}, \boldsymbol{y}_2^{(4)}, \boldsymbol{y}_3^{(4)}, \boldsymbol{y}_4^{(4)}\}$ after much trial and error.
The author believes that further knowledge regarding integer vectors and Hadamard matrices provides a general and systematic method to generate such an orthogonal basis.
\end{remark}

(1-3) The proof of Theorem~\ref{thm1}(\ref{thm1.3-1}) is complete for case $n\ge5$.
It suffices to show the existence of vector $\boldsymbol{v}^{(n)}$ whose elements $v_1,\ldots,v_n$ are \textit{rational numbers} satisfying $\angle(\boldsymbol{a}^{(n)}, \boldsymbol{v}^{(n)})=\theta\in\Theta_n$.
Note that an integer vector in the same direction as $\boldsymbol{v}^{(n)}$ can be constructed by multiplying a common multiple of the denominators of $v_1,\ldots,v_n$.
This rational vector $\boldsymbol{v}^{(n)}$ satisfies
\[
(\boldsymbol{v}^{(n)}\cdot\boldsymbol{a}^{(n)})^2 - \|\boldsymbol{v}^{(n)}\|^2 \|\boldsymbol{a}^{(n)}\|^2\cos^2\theta = 0.
\]
The left-hand side is a quadratic form in $v_1,\ldots,v_n$, and the element $m_{ij}$ of the coefficient matrix $M$ is
%which is rewritten as a quadratic form in $v_1,\ldots, v_n$:
%\[
%\begin{pmatrix} v_1 & \cdots & v_n \end{pmatrix}
%\left[
%\begin{pmatrix} a_1 \\ \vdots \\ a_n \end{pmatrix}
%\begin{pmatrix} a_1 & \cdots & a_n \end{pmatrix}
%-
%\begin{pmatrix}
%\|\boldsymbol{a}^{(n)}\|^2\cos^2\theta\\
%& \ddots\\
%& & \|\boldsymbol{a}^{(n)}\|^2\cos^2\theta
%\end{pmatrix}
%\right]
%\begin{pmatrix} v_1 \\ \vdots \\ v_n \end{pmatrix}
%\eqqcolon
%\begin{pmatrix} v_1 & \cdots & v_n \end{pmatrix}
%M
%\begin{pmatrix} v_1 \\ \vdots \\ v_n \end{pmatrix}
%= 0.
%\]
%The element of the $n\times n$ matrix $M$ is expressed by
\begin{equation}
m_{ij}=a_ia_j-\delta_{ij}\|\boldsymbol{a}^{(n)}\|^2\cos^2\theta,
\label{eq1}
\end{equation}
where $\delta_{ij}$ is the Kronecker delta.
Note that $\cos^2\theta=1/(\tan^2\theta+1)$ is rational because $\tan^2\theta$ for $\theta\in\Theta_n$ is rational.
Hence, the problem is reduced to determining whether the quadratic form $\sum_{i,j}m_{ij} v_i v_j$ over $\mathbb{Q}$ represents $0$ in $\mathbb{Q}$.

We apply the following theorem for quadratic forms:
\begin{theorem}[Meyer's theorem; see Serre~\cite{Serre} \S3.2]
A quadratic form of rank $\ge5$ represents $0$ in $\mathbb{Q}$ if and only if it is indefinite.
\end{theorem}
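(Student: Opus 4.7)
My plan is to prove Meyer's theorem via the Hasse--Minkowski local-global principle for quadratic forms over $\mathbb{Q}$. Let $Q$ be a nondegenerate quadratic form of rank $n\geq5$ over $\mathbb{Q}$. The easy direction (\emph{represents $0$} implies \emph{indefinite}) is immediate: a nontrivial $\mathbb{Q}$-zero of $Q$ is automatically a nontrivial $\mathbb{R}$-zero, and a definite real form admits no such zero.

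For the hard direction, suppose $Q$ is indefinite. I would invoke the Hasse--Minkowski theorem: $Q$ represents $0$ nontrivially over $\mathbb{Q}$ if and only if it does so over $\mathbb{R}$ and over every $p$-adic completion $\mathbb{Q}_p$. Over $\mathbb{R}$, indefiniteness gives isotropy immediately after diagonalizing and mixing a positive with a negative coordinate. Over each $\mathbb{Q}_p$, isotropy is automatic for forms of rank $\geq 5$, by the purely local statement that an anisotropic quadratic form over $\mathbb{Q}_p$ has dimension at most $4$. This bound of $4$ is precisely why the threshold in Meyer's theorem is rank $\geq5$: for smaller ranks, genuine $p$-adic obstructions can survive and must be verified via Hilbert symbol conditions.

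The local bound at $p$ is the technical heart. For $p$ odd, I would diagonalize and normalize each coefficient to a representative in $\{1,u,p,up\}$, where $u$ is a non-square unit in $\mathbb{Z}_p^\times$; by pigeonhole, among five variables at least three have coefficients in the same ``class'' (either all units or all uniformizer-times-units, the latter reducing to the former after factoring out $p$). This produces a ternary subform with unit coefficients, which is isotropic mod $p$ by counting squares in $\mathbb{F}_p$ (or by Chevalley--Warning), and Hensel's lemma lifts the smooth $\mathbb{F}_p$-zero to $\mathbb{Q}_p$. For $p=2$ the argument is more intricate because $\mathbb{Q}_2^\times/(\mathbb{Q}_2^\times)^2$ has order $8$ and Hensel's lemma requires sharper nonsingularity hypotheses, but the same bound of $4$ on the anisotropic dimension holds via a direct case analysis using the Hilbert symbol.

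The principal obstacle is Hasse--Minkowski itself, a deep global statement whose full proof requires Dirichlet's theorem on primes in arithmetic progressions and a Hilbert reciprocity computation; I would quote it from Serre's text rather than reprove it, and concentrate the exposition on the elementary local classification of $\mathbb{Q}_p$-forms. A secondary obstacle is a clean, unified treatment of the dyadic case $p=2$, which genuinely differs from the odd-residue case and typically demands its own lemma.
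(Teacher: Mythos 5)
Your outline is correct and is essentially the standard argument: the paper itself gives no proof of Meyer's theorem, quoting it directly from Serre~\cite{Serre}, and your reduction via Hasse--Minkowski together with the local fact that anisotropic forms over $\mathbb{Q}_p$ have dimension at most $4$ (pigeonhole on unit versus uniformizer coefficients for odd $p$, Chevalley--Warning plus Hensel for the ternary unit subform, and a separate dyadic case) is precisely how the cited reference derives it. Nothing further is needed beyond what you have sketched.
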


From Eq.~\eqref{eq1}, we find that $\boldsymbol{a}^{(n)}$ is an eigenvector of $M$ and the corresponding eigenvalue is $\|\boldsymbol{a}^{(n)}\|^2\sin^2\theta>0$.
Any vector perpendicular to $\boldsymbol{a}^{(n)}$ is also an eigenvector of $M$, with an eigenvalue or $-\|\boldsymbol{a}^{(n)}\|^2\cos^2\theta<0$.
Note that $\cos\theta,\sin\theta\ne0$, because we exclude $\theta=0, \pi/2, \pi$.
We can select $n-1$ vectors that are orthogonal to each other and that correspond to the latter eigenvalue.
Therefore, the quadratic form in question has rank $n\ge 5$ and is indefinite.
Meyer's theorem can be applied to this quadratic form, completing the proof of Theorem~\ref{thm1}(\ref{thm1.3-1}).

We proceed to the proof of Theorem~\ref{thm1}(\ref{thm1.3-2}).
The proof starts with the following proposition.

\begin{proposition}
Let $\boldsymbol{a}^{(3)}\ne\boldsymbol{0}^{(3)}$ be a three-dimensional integer vector and $\theta\in\Theta_3$ $(\theta\ne0, \pi/2, \pi)$.
The necessary and sufficient condition for $\theta\in\Theta_3(\boldsymbol{a}^{(3)})$ is 
that the ellipse in the $xy$-plane expressed as
\begin{equation}
\frac{\|\boldsymbol{a}^{(3)}\|^2x^2}{\|\boldsymbol{a}_\perp^{(3)}\|^2\tan^2\theta}+\frac{y^2}{\|\boldsymbol{a}_\perp^{(3)}\|^2\tan^2\theta}=1
\label{eq2.2}
\end{equation}
has a rational point $(x,y)\in\mathbb{Q}^2$.
Here, $\boldsymbol{a}_\perp^{(3)}$ is an arbitrary integer vector perpendicular to $\boldsymbol{a}^{(3)}$ and the existence of rational points on the ellipse does not depend on the choice of $\boldsymbol{a}_\perp^{(3)}$.
\label{prop2}
\end{proposition}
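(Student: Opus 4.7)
The plan is to translate the condition $\theta\in\Theta_3(\boldsymbol{a}^{(3)})$ into the existence of a rational point on a conic of the shape in \eqref{eq2.2}, by exploiting an orthogonal decomposition of $\mathbb{R}^3$ in an integer basis.

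First, I would note that $\theta\in\Theta_3(\boldsymbol{a}^{(3)})$ is equivalent to the existence of a nonzero rational vector $\boldsymbol{v}^{(3)}\in\mathbb{Q}^3$ with $\angle(\boldsymbol{a}^{(3)},\boldsymbol{v}^{(3)})=\theta$, since clearing denominators converts such a $\boldsymbol{v}^{(3)}$ to an integer vector without altering the angle. Next, I would fix $\boldsymbol{a}_\perp^{(3)}$ and introduce $\boldsymbol{c}=\boldsymbol{a}^{(3)}\times\boldsymbol{a}_\perp^{(3)}$; by perpendicularity one has $\|\boldsymbol{c}\|^{2}=\|\boldsymbol{a}^{(3)}\|^{2}\|\boldsymbol{a}_\perp^{(3)}\|^{2}$, and $(\boldsymbol{a}^{(3)},\boldsymbol{a}_\perp^{(3)},\boldsymbol{c})$ is an orthogonal basis of $\mathbb{R}^3$ consisting of integer vectors. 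Expanding $\boldsymbol{v}^{(3)}=\alpha\boldsymbol{a}^{(3)}+\beta\boldsymbol{a}_\perp^{(3)}+\gamma\boldsymbol{c}$ with $\alpha,\beta,\gamma\in\mathbb{Q}$ and $\alpha\ne0$ (since $\theta\ne\pi/2$), and computing $\tan^{2}\theta$ as the ratio of the squared perpendicular and parallel components of $\boldsymbol{v}^{(3)}$ relative to $\boldsymbol{a}^{(3)}$, I obtain the homogeneous relation
\[
\alpha^{2}\|\boldsymbol{a}^{(3)}\|^{2}\tan^{2}\theta=\|\boldsymbol{a}_\perp^{(3)}\|^{2}\bigl(\beta^{2}+\gamma^{2}\|\boldsymbol{a}^{(3)}\|^{2}\bigr).
\]
Dehomogenizing by $(x,y)=(\gamma/\alpha,\beta/\alpha)$ and rescaling yields a conic of the shape of \eqref{eq2.2}; conversely, any rational point on that conic produces an explicit rational $\boldsymbol{v}^{(3)}$ realizing the angle $\theta$.

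For the independence claim, I would use that any integer vector $\boldsymbol{b}_\perp^{(3)}$ perpendicular to $\boldsymbol{a}^{(3)}$ lies in the plane spanned by $\boldsymbol{a}_\perp^{(3)}$ and $\boldsymbol{c}$, and can therefore be written as $p\boldsymbol{a}_\perp^{(3)}+q\boldsymbol{c}$ with $p,q\in\mathbb{Q}$. Consequently, $\|\boldsymbol{b}_\perp^{(3)}\|^{2}/\|\boldsymbol{a}_\perp^{(3)}\|^{2}=p^{2}+q^{2}\|\boldsymbol{a}^{(3)}\|^{2}$ is a value represented by the binary quadratic form $Q(X,Y)=\|\boldsymbol{a}^{(3)}\|^{2}X^{2}+Y^{2}$. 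Since $Q$ is a norm form of the quadratic extension $\mathbb{Q}(\sqrt{-\|\boldsymbol{a}^{(3)}\|^{2}})$, its set of nonzero values on $\mathbb{Q}^{2}$ is multiplicatively closed in $\mathbb{Q}^{\times}$; hence the ellipses built from $\boldsymbol{a}_\perp^{(3)}$ and from $\boldsymbol{b}_\perp^{(3)}$ have rational points simultaneously.

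The main obstacle I anticipate is the bookkeeping in the rescaling step: my derivation delivers a conic with right-hand side $\|\boldsymbol{a}^{(3)}\|^{2}\tan^{2}\theta/\|\boldsymbol{a}_\perp^{(3)}\|^{2}$, which differs from the right-hand side $\|\boldsymbol{a}_\perp^{(3)}\|^{2}\tan^{2}\theta$ of \eqref{eq2.2} by the factor $\|\boldsymbol{a}^{(3)}\|^{2}/\|\boldsymbol{a}_\perp^{(3)}\|^{4}=Q(1/\|\boldsymbol{a}_\perp^{(3)}\|^{2},0)$, which is itself a value of $Q$. The multiplicativity of the $Q$-value group is thus the single structural fact that both makes the two conics rationally equivalent and delivers the independence in $\boldsymbol{a}_\perp^{(3)}$.
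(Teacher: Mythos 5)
Your proposal is correct and follows essentially the same route as the paper: both decompose $\boldsymbol{v}^{(3)}$ in the orthogonal basis $\{\boldsymbol{a}^{(3)},\boldsymbol{a}_\perp^{(3)},\boldsymbol{a}^{(3)}\times\boldsymbol{a}_\perp^{(3)}\}$ and reduce the angle condition to the existence of a rational point on a conic $\|\boldsymbol{a}^{(3)}\|^2x^2+y^2=c$. The normalization discrepancy you flag is harmless for exactly the reason you give, and your norm-form multiplicativity argument for the independence of the choice of $\boldsymbol{a}_\perp^{(3)}$ is the same fact as the paper's explicit bijection $(x,y)\mapsto(\alpha x+\beta y,\ \alpha y-\|\boldsymbol{a}^{(3)}\|^2\beta x)$, which is precisely multiplication by $\alpha+\beta\sqrt{-\|\boldsymbol{a}^{(3)}\|^2}$ in $\mathbb{Q}\bigl(\sqrt{-\|\boldsymbol{a}^{(3)}\|^2}\bigr)$.
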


\begin{proof}
Suppose that integer vector $\boldsymbol{v}^{(3)}=(v_1, v_2, v_3)$ intersects with $\boldsymbol{a}^{(3)}$ at angle $\theta$.
By definition,
\[
(\boldsymbol{a}^{(3)}\cdot\boldsymbol{v}^{(3)})^2-\|\boldsymbol{a}^{(3)}\|^2\|\boldsymbol{v}^{(3)}\|^2\cos^2\theta=0.
\]
This equation is written as the quadratic form in $v_1, v_2$, and $v_3$:
\[
\begin{pmatrix}
v_1 & v_2 & v_3
\end{pmatrix}
\begin{pmatrix}
a_1^2-\|\boldsymbol{a}^{(3)}\|^2\cos^2\theta & a_1a_2 & a_1a_3\\
a_1a_2 & a_2^2-\|\boldsymbol{a}^{(3)}\|^2\cos^2\theta & a_2a_3\\
a_1a_3 & a_2a_3 & a_3^2-\|\boldsymbol{a}^{(3)}\|^2\cos^2\theta
\end{pmatrix}
\begin{pmatrix}
v_1 \\ v_2 \\ v_3
\end{pmatrix}
=0.
\]
According to a discussion similar to the above proof (1-3), the vector $\boldsymbol{a}^{(3)}$ is an eigenvector of the $3\times3$ matrix in the above equation with an eigenvalue of $\|\boldsymbol{a}^{(3)}\|^2\sin^2\theta$.
Each vector orthogonal to $\boldsymbol{a}^{(3)}$ is also an eigenvector and its eigenvalue is $-\|\boldsymbol{a}^{(3)}\|^2\cos^2\theta$.
Hence, if a certain $\boldsymbol{a}_\perp^{(3)}$ is fixed, $\{\boldsymbol{a}^{(3)}, \boldsymbol{a}_\perp^{(3)}, \boldsymbol{a}^{(3)}\times\boldsymbol{a}_\perp^{(3)}\}$ is an orthogonal basis that diagonalizes the matrix (here $\boldsymbol{a}^{(3)}\times\boldsymbol{a}_\perp^{(3)}$ is the vector product).
After diagonalization, we obtain
\begin{equation}
\begin{pmatrix}w_1 & w_2 & w_3 \end{pmatrix}
\begin{pmatrix}\sin^2\theta\\ & -\cos^2\theta\\ & & -\cos^2\theta\end{pmatrix}
\begin{pmatrix}w_1 \\ w_2 \\ w_3 \end{pmatrix}
=0,
\label{eq3}
\end{equation}
where
\[
w_1=\|\boldsymbol{a}_\perp^{(3)}\|\boldsymbol{a}^{(3)}\cdot\boldsymbol{v}^{(3)},\quad
w_2= \|\boldsymbol{a}^{(3)}\|\boldsymbol{a}_\perp^{(3)}\cdot\boldsymbol{v}^{(3)},\quad
w_3= (\boldsymbol{a}^{(3)}\times\boldsymbol{a}_\perp^{(3)})\cdot\boldsymbol{v}^{(3)}.
\]
Equation~\eqref{eq3} can be written as
\[
\frac{\|\boldsymbol{a}^{(3)}\|^2x^2}{\|\boldsymbol{a}_\perp^{(3)}\|^2\tan^2\theta}
+\frac{y^2}{\|\boldsymbol{a}_\perp^{(3)}\|^2\tan^2\theta}=1,
\]
where
\begin{equation}
x=\frac{\boldsymbol{a}_\perp^{(3)}\cdot\boldsymbol{v}^{(3)}}{\boldsymbol{a}^{(3)}\cdot\boldsymbol{v}^{(3)}},\quad
y=\frac{(\boldsymbol{a}^{(3)}\times\boldsymbol{a}_\perp^{(3)})\cdot\boldsymbol{v}^{(3)}}{\boldsymbol{a}^{(3)}\cdot\boldsymbol{v}^{(3)}}.
\label{eq2.4}
\end{equation}
When $\boldsymbol{a}^{(3)}, \boldsymbol{a}_\perp^{(3)}$, and $\boldsymbol{v}^{(3)}$ are integer vectors, the denominators and numerators of $x$ and $y$ are integers and $x$ and $y$ are rational.
Therefore, the existence of $\boldsymbol{v}^{(n)}$ corresponds to a rational point on ellipse~\eqref{eq2.2}.

Conversely, if ellipse~\eqref{eq2.2} has a rational point $(x, y)$, one can find integers $v_1, v_2$, and $v_3$ by solving Eq.~\eqref{eq2.4}.

Finally, we prove that whether ellipse~\eqref{eq2.2} has a rational point does not depend on the choice of $\boldsymbol{a}_\perp^{(3)}$.
Let a certain $\boldsymbol{a}_\perp^{(3)}$ be fixed, and $\boldsymbol{b}^{(3)}\ne\boldsymbol{a}_\perp^{(3)}$ be another integer vector perpendicular to $\boldsymbol{a}^(3)$.
We construct a bijective correspondence between the set of rational points on ellipse~\eqref{eq2.2} and that on the ellipse where $\boldsymbol{a}_\perp^{(3)}$ in Eq.~\eqref{eq2.2} is replaced by $\boldsymbol{b}^{(3)}$.
The vector $\boldsymbol{b}^{(3)}$ lies on the plane spanned by $\boldsymbol{a}_\perp^{(3)}$ and $\boldsymbol{a}^{(3)}\times\boldsymbol{a}_\perp^{(3)}$, and we can write
\[
\boldsymbol{b}^{(3)}=\alpha\boldsymbol{a}_\perp^{(3)}+\beta\boldsymbol{a}^{(3)}\times\boldsymbol{a}_\perp^{(3)},
\]
with rational numbers $\alpha$ and $\beta$ ($\alpha\beta\ne0$).
The squared norm of $\boldsymbol{b}^{(3)}$ is calculated to be
\[
\|\boldsymbol{b}^{(3)}\|^2=\alpha^2\|\boldsymbol{a}_\perp^{(3)}\|^2+\beta^2\|\boldsymbol{a}^{(3)}\times\boldsymbol{a}_\perp^{(3)}\|^2=(\alpha^2+\beta^2\|\boldsymbol{a}\|^2)\|\boldsymbol{a}_\perp^{(3)}\|^2,
\]
owing to the orthogonality of $\boldsymbol{a}^{(3)}, \boldsymbol{a}_\perp^{(3)}$, and $\boldsymbol{a}^{(3)}\times\boldsymbol{a}_\perp^{(3)}$.
Thus, the equation of the ellipse where $\boldsymbol{a}_\perp^{(3)}$ in Eq.~\eqref{eq2.2} is replaced by $\boldsymbol{b}^{(3)}$ is
\[
\frac{x'^2+\|\boldsymbol{a}^{(3)}\|^2y'^2}{(\alpha^2+\beta^2\|\boldsymbol{a}^{(3)}\|^2)\|\boldsymbol{a}_\perp^{(3)}\|^2\tan^2\theta}=1.
\]
If $(x,y)$ is a rational point on the ellipse for $\boldsymbol{a}_\perp^{(3)}$, then
\[
\begin{pmatrix}
x'\\
y'
\end{pmatrix}
=
\begin{pmatrix}
\alpha x+\beta y\\
\alpha y-\|\boldsymbol{a}^{(3)}\|^2\beta x
\end{pmatrix}
=\begin{pmatrix}
\alpha & \beta\\
-\|\boldsymbol{a}^{(3)}\|^2\beta & \alpha
\end{pmatrix}
\begin{pmatrix}
x\\
y
\end{pmatrix}
\]
is a rational point on the ellipse for $\boldsymbol{b}^{(3)}$.
This relation between $(x, y)$ and $(x', y')$ is bijective because the $2\times2$ matrix has determinant $\alpha^2+\|\boldsymbol{a}^{(3)}\|^2\beta^2>0$.
Thus, sets of rational points on the ellipse for different choices of $\boldsymbol{a}_\perp^{(3)}$ have a one-to-one correspondence with each other.
\end{proof}

Proposition~\ref{prop2} appears slightly unsatisfactory, in that the condition for $\theta\in\Theta_3(\boldsymbol{a}^{(3)})$ involves arbitrary $\boldsymbol{a}_\perp^{(3)}$, 
although we proved that the choice of $\boldsymbol{a}_\perp^{(3)}$ does not affect the result.
The statement of Proposition~\ref{prop2} is improved by introducing the Hilbert symbol, which determines the existence of rational points on a rational conic.
The basic properties of the Hilbert symbol can be found in Serre~\cite{Serre}.

\begin{lemma}
$\Theta_3(\boldsymbol{a}^{(3)})$ contains $\theta\in\Theta_3$ $(\theta\ne0,\pi/2,\pi)$ if and only if
\begin{equation}
(\|\boldsymbol{a}^{(3)}\|^2, \|\boldsymbol{a}^{(3)}\|^2)_p (\|\boldsymbol{a}^{(3)}\|^2, \tan^2\theta)_p (\tan^2\theta, \tan^2\theta)_p = 1
\label{eq2.5}
\end{equation}
holds for any odd prime $p$,
where $(\cdot,\cdot)_p$ denotes the Hilbert symbol.
\label{lemma1}
\end{lemma}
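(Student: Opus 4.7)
My plan is to translate $\theta\in\Theta_3(\boldsymbol{a}^{(3)})$ into a local Hilbert-symbol condition via Proposition~\ref{prop2} and the Hasse--Minkowski theorem, and then to eliminate the auxiliary vector $\boldsymbol{a}_\perp^{(3)}$ by a local analysis modulo $p$. By Proposition~\ref{prop2}, $\theta\in\Theta_3(\boldsymbol{a}^{(3)})$ is equivalent to the existence of a non-trivial rational point on the conic $AX^2+Y^2-BTZ^2=0$, where $A=\|\boldsymbol{a}^{(3)}\|^2$, $B=\|\boldsymbol{a}_\perp^{(3)}\|^2$, and $T=\tan^2\theta$. By Hasse--Minkowski this is equivalent to local solvability at every completion of $\mathbb{Q}$; after multiplying by $BT$ and substituting $W=BTZ$, this becomes the Hilbert-symbol condition $(-A,BT)_p=1$ at every prime $p$ including $p=\infty$. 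The archimedean condition is automatic because $A,B,T>0$ makes the ellipse a genuine ellipse with real points, and Hilbert's product formula $\prod_p(-A,BT)_p=1$ then forces the $p=2$ condition from the odd-prime conditions, so it suffices to verify $(-A,BT)_p=1$ at every odd prime $p$.

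The bilinearity of the Hilbert symbol together with the identities $(a,a)_p=(a,-1)_p$ and $(a,b)_p=(b,a)_p$ yields, after a short manipulation,
\[
(-A,BT)_p = (A,A)_p(A,T)_p(T,T)_p\cdot (-A,B)_p(A,A)_p.
\]
Hence the lemma reduces to the single key identity $(-A,B)_p=(A,A)_p$, to be established at every odd prime $p$ and every admissible $\boldsymbol{a}_\perp^{(3)}$. When $p\nmid AB$ both sides are trivially $1$, since Hilbert symbols of units vanish at odd primes.

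The heart of the argument, and the main obstacle, is this identity at primes dividing $A$ or $B$, which I would establish by a mod-$p$ analysis of $(\boldsymbol{a}^{(3)})^{\perp}\cap\mathbb{Z}^3$. When $p\mid A$ the reduction $\bar{\boldsymbol{a}}\in\mathbb{F}_p^3$ is isotropic for $\sum x_i^2$, and since this ternary form has discriminant $1$ over $\mathbb{F}_p$ it decomposes as a hyperbolic plane plus $\langle -1\rangle$; consequently the induced form on the one-dimensional quotient $\bar{\boldsymbol{a}}^\perp/\langle\bar{\boldsymbol{a}}\rangle$ is $\langle -1\rangle$. Whenever $\bar{\boldsymbol{a}_\perp^{(3)}}$ is not proportional to $\bar{\boldsymbol{a}}$ modulo $p$ (equivalently $p\nmid B$), it follows that $-B$ is a non-zero square modulo $p$, and the explicit odd-$p$ Hilbert-symbol formula then yields $(-A,B)_p=(A,-1)_p=(A,A)_p$ as required. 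The case $p\nmid A$, $p\mid B$ is handled by the same argument with the roles of $\boldsymbol{a}^{(3)}$ and $\boldsymbol{a}_\perp^{(3)}$ exchanged. The remaining sub-case $p\mid A$ and $p\mid B$ simultaneously, which forces $\bar{\boldsymbol{a}}$ and $\bar{\boldsymbol{a}_\perp^{(3)}}$ to be parallel modulo $p$, I would dispose of by appealing to the $\boldsymbol{a}_\perp^{(3)}$-independence of the ellipse condition proved in Proposition~\ref{prop2}: after reducing to the primitive form of $\boldsymbol{a}^{(3)}$, the rank-two lattice $(\boldsymbol{a}^{(3)})^{\perp}\cap\mathbb{Z}^3$ surjects onto $\bar{\boldsymbol{a}}^\perp\subset\mathbb{F}_p^3$, and this plane contains vectors outside the radical $\langle\bar{\boldsymbol{a}}\rangle$, so one may replace $\boldsymbol{a}_\perp^{(3)}$ by another perpendicular integer vector with $p\nmid B$ and invoke the previous case.
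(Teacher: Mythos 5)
Your proposal is correct and reaches the same reduction as the paper --- local solvability of the conic at odd primes, with $\infty$ automatic and $p=2$ dropped by the product formula, leaving the single identity $(-A,B)_p=(A,A)_p$ (equivalently the paper's pair $(B,B)_p=1$ and $(A,B)_p=(A,A)_p$, where $A=\|\boldsymbol{a}^{(3)}\|^2$, $B=\|\boldsymbol{a}_\perp^{(3)}\|^2$) --- but you prove that identity by a genuinely different route. The paper fixes the explicit choice $\boldsymbol{a}_\perp^{(3)}=(a_2,-a_1,0)$ and uses two ad hoc algebraic facts: $B=a_1^2+a_2^2$ is a sum of two squares, so $(B,B)_p=1$, and $B=A-a_3^2$, so $(A,B)_p=(A,A)_p$ via $(a,1-a)_p=1$. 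You instead keep $\boldsymbol{a}_\perp^{(3)}$ arbitrary and extract the identity from the Witt decomposition $x_1^2+x_2^2+x_3^2\cong H\perp\langle-1\rangle$ over $\mathbb{F}_p$ when $\bar{\boldsymbol{a}}$ is isotropic, concluding that $-B$ is a unit square mod $p$; this is structurally more illuminating (it explains \emph{why} the answer is independent of $\boldsymbol{a}_\perp^{(3)}$ and generalizes beyond the standard form), at the cost of case analysis and a lattice-reduction argument, whereas the paper's computation is shorter and entirely elementary. One point you should tighten: in the sub-case $p\mid A$ and $p\mid B$ you swap $\boldsymbol{a}_\perp^{(3)}$ for a better perpendicular vector and appeal to the independence statement of Proposition~\ref{prop2}, but that statement concerns \emph{rational} points, while what you need is that the local condition $(-A,BT)_p=1$ at the fixed prime $p$ is unchanged. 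This is true and easy --- the two conics are related by a linear change of variables over $\mathbb{Q}$, or directly $B'/B=\alpha^2+A\beta^2$ is a norm from $\mathbb{Q}_p(\sqrt{-A})$, so $(-A,B'/B)_p=1$ --- but it is not literally what Proposition~\ref{prop2} asserts, so state it explicitly.
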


\begin{proof}
According to the property (or definition) of the Hilbert symbol, the ellipse in Eq.~\eqref{eq2.2} has a rational point if and only if
\begin{equation}
\left(\frac{\|\boldsymbol{a}_\perp^{(3)}\|^2\tan^2\theta}{\|\boldsymbol{a}^{(3)}\|^2}, \|\boldsymbol{a}_\perp^{(3)}\|^2\tan^2\theta\right)_v=1
\label{eq2.6}
\end{equation}
for every prime $v=p$ and $v=\infty$.
The case $v=\infty$ is true for any $\theta, \boldsymbol{a}^{(3)}$, and $\boldsymbol{a}_\perp^{(3)}$ because the two arguments of the Hilbert symbol are positive.
According to the product formula of the Hilbert symbol, it suffices to show the case in which $v$ is an odd prime $p$, and the $v=2$ case can be omitted.
By using properties of the Hilbert symbol, Eq.~\eqref{eq2.6} is reduced to
\[
(\|\boldsymbol{a}^{(3)}\|^2, \|\boldsymbol{a}_\perp^{(3)}\|^2)_p (\|\boldsymbol{a}_\perp^{(3)}\|^2, \|\boldsymbol{a}_\perp^{(3)}\|^2)_p
(\|\boldsymbol{a}^{(3)}\|^2,\tan^2\theta)_p (\tan^2\theta, \tan^2\theta)_p=1.
\]

Without loss of generality, we can assume $a_1\ne0$, and we set $\boldsymbol{a}_\perp^{(3)}=(a_2, -a_1,0)(\ne\boldsymbol{0}^{(3)})$ in the following.
(Recall that the existence of rational points on ellipse~\eqref{eq2.2} does not depend on the choice of $\boldsymbol{a}_\perp^{(3)}$.)
We have 
\[
(\|\boldsymbol{a}_\perp^{(3)}\|^2, \|\boldsymbol{a}_\perp^{(3)}\|^2)_p=(a_1^2+a_2^2, a_1^2+a_2^2)_p=1,
\]
because the circle $(a_1^2+a_2^2)(x^2+y^2)=1$ has a rational point $(x,y)=(a_1, a_2)/(a_1^2+a_2^2)$.
Next, we calculate $(\|\boldsymbol{a}^{(3)}\|^2, \|\boldsymbol{a}_\perp^{(3)}\|^2)_p$.
If $a_3=0$, we immediately get $(\|\boldsymbol{a}^{(3)}\|^2, \|\boldsymbol{a}_\perp^{(3)}\|^2)=(\|\boldsymbol{a}^{(3)}\|^2, \|\boldsymbol{a}^{(3)}\|^2)$ because $\|\boldsymbol{a}_\perp^{(3)}\|^2=\|\boldsymbol{a}^{(3)}\|^2$ in this case.
If $a_3\ne0$, the relation $\|\boldsymbol{a}_\perp^{(3)}\|^2=\|\boldsymbol{a}^{(3)}\|^2-a_3^2$ yields
\begin{align*}
(\|\boldsymbol{a}^{(3)}\|^2, \|\boldsymbol{a}_\perp^{(3)}\|^2)_p
%&=\left(\|\boldsymbol{a}^{(3)}\|^2, \frac{\|\boldsymbol{a}_\perp^{(3)}\|^2}{\|\boldsymbol{a}^{(3)}\|^2}\right)_p (\|\boldsymbol{a}^{(3)}\|^2, \|\boldsymbol{a}^{(3)}\|^2)_p\\
&=\left(\|\boldsymbol{a}^{(3)}\|^2, 1-\frac{a_3^2}{\|\boldsymbol{a}^{(3)}\|^2}\right)_p (\|\boldsymbol{a}^{(3)}\|^2, \|\boldsymbol{a}^{(3)}\|^2)_p\\
&=(\|\boldsymbol{a}^{(3)}\|^2, \|\boldsymbol{a}^{(3)}\|^2)_p,
\end{align*}
where we applied the formula $(a, 1-a)_p=1$ in the last equality.
Thus, regardless of whether $a_3=0$ or not, we obtain
\[
(\|\boldsymbol{a}^{(3)}\|^2, \|\boldsymbol{a}_\perp^{(3)}\|^2)_p=(\|\boldsymbol{a}^{(3)}\|^2, \|\boldsymbol{a}^{(3)}\|^2)_p.
\]
This completes the proof.
\end{proof}

\begin{remark}
From this lemma, we find that the set $\Theta_3(\boldsymbol{a}^{(3)})$ depends only on the squared norm $\|\boldsymbol{a}^{(3)}\|^2$ and not on the individual components $a_1, a_2$, and $a_3$.
\end{remark}

Using Lemma~\ref{lemma1}, we provide the proof of Theorem~\ref{thm1}(\ref{thm1.3-2}).
For each integer vector $\boldsymbol{a}^{(3)}$, we prove that there exists $\theta\in\Theta_3$ ($\theta\ne0,\pi/2,\pi$) that does not satisfy Eq.~\eqref{eq2.5}.
We consider the following four cases separately according to the prime factorization of $\|\boldsymbol{a}^{(3)}\|^2$.
(Note that any $\|\boldsymbol{a}^{(3)}\|^2$ falls under at least one case and may match more than one case in \ref{case2}--\ref{case4}.)
\begin{enumerate}
\renewcommand{\labelenumi}{(\alph{enumi})}
\renewcommand{\theenumi}{\labelenumi}
\item When $\|\boldsymbol{a}^{(3)}\|^2$ is a square.

Because $(\|\boldsymbol{a}^{(3)}\|^2, \|\boldsymbol{a}^{(3)}\|^2)_p=(\|\boldsymbol{a}^{(3)}\|^2, \tan^2\theta)_p=1$ for any prime $p$ and $\theta$ in this case, $\Theta_3(\boldsymbol{a}^{(3)})$ does not contain $\theta$ whose squared tangent satisfies $(\tan^2\theta, \tan^2\theta)_p=-1$ for some $p$.
For example, $\theta=\pi/3$ ($\tan^2\theta=3$) belongs to $\Theta_3$, but not to $\Theta_3(\boldsymbol{a}^{(3)})$ because $(3,3)_3=-1$.
Since $\|(1,0,0)\|^2=1$, Proposition~\ref{prop1.1}, where $\Theta_3(1,0,0)\not\ni\pi/3$, is a special case of this result.

\item\label{case2} When a prime $p\equiv3\pmod4$ appears in the square-free part of $\|\boldsymbol{a}^{(3)}\|^2$ (equivalently, $p$ has an odd exponent in the factorization of $\|\boldsymbol{a}^{(3)}\|^2$).

In this case, $(\|\boldsymbol{a}^{(3)}\|^2, \|\boldsymbol{a}^{(3)}\|^2)_p=(p,p)_p=-1$.
If $\tan\theta$ is rational (namely, $\theta\in\Theta_2$), $(\|\boldsymbol{a}^{(3)}\|^2, \tan^2\theta)_p=(\tan^\theta, \tan^2\theta)_p=1$ always holds, so that Eq.~\eqref{eq2.5} is not satisfied.
Hence, $\Theta_3(\boldsymbol{a}^{(3)})$ in this case does not contain any $\theta\in\Theta_2$.

\item When a prime $p\equiv1\pmod4$ appears in the square-free part of $\|\boldsymbol{a}^{(3)}\|^2$.

In this case, $(\|\boldsymbol{a}^{(3)}\|^2, \|\boldsymbol{a}^{(3)}\|^2)_p=(p,p)_p=1$.
We take an integer $c$ such that $1\le c\le p-1$ and $(c,p)_p=\left(\dfrac{c}{p}\right)=-1$, where $\left(\dfrac{c}{p}\right)$ is the Legendre symbol~\cite{Serre}.
(Such $c$ always exists, because exactly half of integers in $1\le c\le p-1$ are $(p, c)_p=-1$.) % Reference?
We also have
\[
(p-c,p)_p=\left(\frac{p-c}{p}\right)=\left(\frac{-c}{p}\right)=(-1,p)_p(c,p)_p=-1,
\]
because $(-1,p)_p=1$ for $p\equiv1\pmod4$.
Since $c+(p-c)=p\equiv1\mod4$, at least either $c$ or $p-c$ is not of the form $4^k(8m+7)$.
Thus, by interchanging $c$ and $p-c$ if necessary, there exists an integer $c$ such that $(c,p)_p=-1$ and $c$ is not of the form $4^k(8m+7)$.
From this result, we can take the angle $\theta\in\Theta_3$ being $\tan^2\theta=c$, and have
\begin{align*}
&(\|\boldsymbol{a}^{(3)}\|^2, \|\boldsymbol{a}^{(3)}\|^2)_p(\|\boldsymbol{a}^{(3)}\|^2, \tan^2\theta_1)_p(\tan^2\theta_1, \tan^2\theta_1)_p\\
&= (p, p)_p (p, c)_p (c,c)_p = -1,
\end{align*}
which violates Eq.~\eqref{eq2.5}.
Hence, $\theta$ belongs to $\Theta_3$ but not to $\Theta_3(\boldsymbol{a}^{(3)})$.

\item\label{case4} When $2$ appears in the square-free part of $\|\boldsymbol{a}^{(3)}\|^2$.

We let $p$ be an odd prime which is coprime to $\|\boldsymbol{a}^{(3)}\|^2$; that is, $(\|\boldsymbol{a}^{(3)}\|^2, \|\boldsymbol{a}^{(3)}\|^2)_p=1$.
Taking the angle $\theta$ such that $\tan^2\theta=p$, we get
\begin{align*}
(\|\boldsymbol{a}^{(3)}\|^2, \tan^2\theta)_p (\tan^2\theta, \tan^2\theta)_p
&=(2, p)_p (p, p)_p\\
&= (-1)^{(p^2-1)/8} (-1)^{(p-1)/2}\\
%=(-1)^{(p-1)(p+5)/8}
&=\begin{cases}
+1 & p\equiv1,3\pmod8\\
-1 & p\equiv5,7\pmod8.
\end{cases}
\end{align*}
Hence, $\Theta_3(\boldsymbol{a}^{(3)})$ does not contain $\theta$ whose squared tangent is a prime number coprime to $\|\boldsymbol{a}^{(3)}\|^2$ and $\tan^2\theta\equiv5\pmod8$.
We note that the other choice of $\tan^2\theta\equiv7\pmod8$ is not an appropriate example of $\theta\in\Theta_3$ and $\theta\notin\Theta_3(\boldsymbol{a}^{(3)})$, because this $\theta$ does not belong to $\Theta_3$ (owing to the three-square theorem).
\end{enumerate}

\begin{remark}
Based on Lemma~\ref{lemma1}, if $\theta\in\Theta_3(\boldsymbol{a}^{(3)})\setminus\{0,\pi/2,\pi\}$, then the angle $\theta_n$ given by $\tan^2\theta_n=n^2\tan^2\theta$ $(n=1,2,3,\ldots)$ satisfies $\theta_n\in\Theta_3(\boldsymbol{a}^{(3)})$.
Similarly, $\theta\in\Theta_3\setminus\Theta_3(\boldsymbol{a}^{(3)})$ implies $\theta_n\in\Theta_3\setminus\Theta_3(\boldsymbol{a}^{(3)})$.
Hence, for any integer vector $\boldsymbol{a}^{(3)}\ne\boldsymbol{0}^{(3)}$, both sets $\Theta_3(\boldsymbol{a}^{(3)})$ and $\Theta_3\setminus\Theta_3(\boldsymbol{a}^{(3)})$ are infinite.
\end{remark}

If $\boldsymbol{b}^{(3)}$ is an integer vector satisfying $\angle(\boldsymbol{a}^{(3)}, \boldsymbol{b}^{(3)})=\theta$, then the integer multiplications $2\boldsymbol{b}^{(3)}, 3\boldsymbol{b}^{(3)},\ldots$ are also integer vectors that intersect with $\boldsymbol{a}^{(3)}$ at angle $\theta$.
Even if this trivial multiplicity is ignored, an infinite number of integer vectors intersect with $\boldsymbol{a}^{(3)}$ at angle $\theta$.
In other words, there exist an infinite number of integer vectors $\boldsymbol{v}^{(3)}$ with mutually different directions such that $\angle(\boldsymbol{a}^{(3)}, \boldsymbol{v}^{(3)})=\theta$.
In fact, from Eq.~\eqref{eq2.4}, $\boldsymbol{v}^{(3)}, 2\boldsymbol{v}^{(3)}, 3\boldsymbol{v}^{(3)},\ldots$ yield the same rational point $(x,y)$ of the ellipse~\eqref{eq2.2}, and different rational points correspond to integer vectors having different directions.
According to the theory of rational points on conics~\cite{Husemoller}, whenever one rational point is found on a rational conic, there exists infinitely and densely distributed rational points on it.
This result can be summarized as follows:
\begin{corollary}
If $\theta\in\Theta_3(\boldsymbol{a}^{(3)})$, then there exists an infinite number of integer vectors with mutually different directions that intersect with $\boldsymbol{a}^{(3)}$ at angle $\theta$.
\end{corollary}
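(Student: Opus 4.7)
The plan is to combine Proposition~\ref{prop2}, which translates the hypothesis $\theta\in\Theta_3(\boldsymbol{a}^{(3)})$ into the existence of a rational point on ellipse~\eqref{eq2.2}, with the classical chord construction for rational points on a conic. Once one rational point on a smooth rational conic is known, sweeping lines of rational slope through it produces infinitely many more, and the bijection established in the proof of Proposition~\ref{prop2} then converts these into infinitely many integer-vector directions realizing the angle $\theta$.

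First I would fix an integer vector $\boldsymbol{a}_\perp^{(3)}$ orthogonal to $\boldsymbol{a}^{(3)}$ and let $P_0=(x_0,y_0)\in\mathbb{Q}^2$ be a rational point on the ellipse~\eqref{eq2.2}, whose existence is guaranteed by Proposition~\ref{prop2}. For each slope $m\in\mathbb{Q}$, the line $y-y_0=m(x-x_0)$ intersects the ellipse at $P_0$ and at one further point; substituting this line into~\eqref{eq2.2} yields a quadratic in $x$ with rational coefficients, one of whose roots is $x_0\in\mathbb{Q}$, so by Vieta's formulas the other root, and hence the full intersection point, is rational. Distinct slopes produce distinct second intersection points, giving infinitely many rational points on the ellipse.

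Next, each such rational point must be converted back to an integer vector. By the converse direction of Proposition~\ref{prop2}, one solves Eq.~\eqref{eq2.4} for a rational vector $\boldsymbol{v}^{(3)}$ and clears denominators to obtain an integer vector satisfying $\angle(\boldsymbol{a}^{(3)},\boldsymbol{v}^{(3)})=\theta$. The crucial observation is that the pair $(x,y)$ defined in Eq.~\eqref{eq2.4} is invariant under positive scalar multiplication of $\boldsymbol{v}^{(3)}$ (numerator and denominator rescale by the same factor), and conversely the triple $(\boldsymbol{a}^{(3)}\cdot\boldsymbol{v}^{(3)},\boldsymbol{a}_\perp^{(3)}\cdot\boldsymbol{v}^{(3)},(\boldsymbol{a}^{(3)}\times\boldsymbol{a}_\perp^{(3)})\cdot\boldsymbol{v}^{(3)})$ determines $\boldsymbol{v}^{(3)}$ uniquely because $\{\boldsymbol{a}^{(3)},\boldsymbol{a}_\perp^{(3)},\boldsymbol{a}^{(3)}\times\boldsymbol{a}_\perp^{(3)}\}$ is an orthogonal basis of $\mathbb{R}^3$. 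Consequently, the map from directions of integer vectors realizing angle $\theta$ to rational points on the ellipse is a bijection, so infinitely many rational points yield infinitely many distinct directions.

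The main obstacle is conceptual rather than computational: one must be sure that the chord construction produces genuinely new points rather than returning $P_0$ itself, and that the resulting rational points correspond to lines through the origin in $\mathbb{R}^3$ that are pairwise distinct. Both checks are immediate from the bijective correspondence already set up in Proposition~\ref{prop2}, so once that correspondence is invoked the argument reduces to the standard fact that a smooth conic over $\mathbb{Q}$ with one rational point has a dense set of them.
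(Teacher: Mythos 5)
Your proposal is correct and follows essentially the same route as the paper: both pass through Proposition~\ref{prop2} to obtain one rational point on ellipse~\eqref{eq2.2}, use the fact that a rational conic with one rational point has infinitely many, and then observe via Eq.~\eqref{eq2.4} that distinct rational points correspond to distinct directions of integer vectors. The only difference is that you make the middle step explicit with the chord/Vieta construction, whereas the paper simply cites the standard theory of rational points on conics.
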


Lemma~\ref{lemma1} or Proposition~\ref{prop2} describes the criterion for the existence of an integer vector $\boldsymbol{v}^{(3)}$ such that $\angle(\boldsymbol{a}^{(3)}, \boldsymbol{v}^{(3)})=\theta$.
When $\boldsymbol{a}^{(3)}$ and $\theta$ satisfy Eq.~\eqref{eq2.5}, this lemma cannot tell us how to find such a vector $\boldsymbol{v}^{(3)}$.
To obtain $\boldsymbol{v}^{(3)}$, we have to find a rational point $(x,y)$ on the ellipse~\eqref{eq2.2} and solve Eq.~\eqref{eq2.4}.
For finding rational points on a rational conic, see Kato, Kurokawa, Sait\=o, and Kurihara~\cite{Kato}.

\section{Lattice angles in dimension three}\label{sec3}
Recall that Theorem~\ref{thm1} states that the set $\Theta_n(\boldsymbol{a}^{(n)})$ of lattice angles is not equal to $\Theta_n$ only for $n=3$.
Determining $\Theta_3(\boldsymbol{a}^{(3)})$ is a next natural problem.
This section focuses on three-dimensional vectors, and we omit the superscript ``(3)'' and use simple symbols such as $\boldsymbol{a}$.
We explicitly write the condition for $\theta\in\Theta_3(\boldsymbol{a})$ when $\|\boldsymbol{a}\|^2$ has a simple prime factorization, as in Theorem~\ref{thm3} below.

\begin{definition}
The square-free part of a positive rational number $X$ is the minimum integer $N$ such that $X/N$ is a squared rational.

For example, the square-free part of $1/4$ is $1$ and that of $5/12$ is $15$.
\end{definition}

When the square-free part of $\tan^2\theta$ is $N$, the Hilbert symbols appearing in Eq.~\eqref{eq2.5} are calculated to be $(\|\boldsymbol{a}\|^2,\tan^2\theta)_p=(\|\boldsymbol{a}\|^2,N)_p$ and $(\tan^2\theta, \tan^2\theta)_p=(N, N)_p$.
Thus, whether $\theta\in\Theta_3$ is an element of $\Theta_3(\boldsymbol{a})$, that is, whether $\boldsymbol{a}$ has an integer vector that intersects at the angle $\theta\in\Theta_3$ depends only on the square-free part of $\tan^2\theta$.
In the following theorem, we assume that the square-free part of $\tan^2\theta$ is factorized as $p_1\cdots p_s$ or $2p_1\cdots p_s$, where $p_1,\ldots,p_s$ are odd primes.
These two forms can be expressed as $2^b p_1\cdots p_s$, where $b=0$ or $1$.
If $s=0$, we define as $p_1\cdots p_s=1$.

\begin{theorem}
We let $M$ be an integer and $b,b'\in\{0,1\}$.
\begin{enumerate}
\item When the squared norm of $\boldsymbol{a}$ is a perfect square \textup(i.e., $\|\boldsymbol{a}\|^2=M^2$\textup),
$\theta\in\Theta_3(\boldsymbol{a})\setminus\{0,\pi/2,\pi\}$ if and only if the square-free part of $\tan^2\theta$ is of the form $2^b p_1\cdots p_s$, with $p_i\equiv1\pmod4$.
\item When the squared norm of $\boldsymbol{a}$ is of the form $\|\boldsymbol{a}\|^2=2M^2$,
$\theta\in\Theta_3(\boldsymbol{a})\setminus\{0,\pi/2,\pi\}$ if and only if the square-free part of $\tan^2\theta$ is of the form $2^b p_1\cdots p_s$, with $p_i\equiv1, 3\pmod 8$.
\item When the squared norm of $\boldsymbol{a}$ is of the form $\|\boldsymbol{a}\|^2=qM^2$ where $q$ is an odd prime, the condition for $\theta\in\Theta_3(\boldsymbol{a})$ can be written according to the residue of $q$ divided by $8$.
\begin{enumerate}
\makeatletter
\renewcommand{\p@enumii}{}
\makeatother
\item $q\equiv3\pmod8$\textup: $\theta\in\Theta_3(\boldsymbol{a})\setminus\{0,\pi/2,\pi\}$ if and only if the square-free part of $\tan^2\theta$ is of the form $2q^{b'}p_1\cdots p_s$, with $\left(\dfrac{p_i}{q}\right)=1$ for $i=1,\ldots,s$.
%Here, $\left(\dfrac{p}{q}\right)$ is the Legendre symbol~\cite{Serre}.
%
\item\label{thm3.1-3-2} $q\equiv1\pmod8$\textup: $\theta\in\Theta_3(\boldsymbol{a})\setminus\{0,\pi/2,\pi\}$ if and only if the square-free part of $\tan^2\theta$ is of the form $2^b q^{b'} p_1\cdots p_s$, with $\left(\dfrac{p_i}{q}\right)=(-1)^{(p_i-1)/2}$ for $i=1,\ldots,s$ and $(p_1\cdots p_s-1)/2$ being even.
%In other words, there exist an even number of $p_i$'s of the form $p_i\equiv3\pmod4$.
%
\item\label{thm3.1-3-3} $q\equiv5\pmod8$\textup:
$\theta\in\Theta_3(\boldsymbol{a})\setminus\{0,\pi/2,\pi\}$ if and only if
the square-free part of $\tan^2\theta$ is of the form $q^{b'} p_1\cdots p_s$ with $\left(\dfrac{p_i}{q}\right)=(-1)^{(p_i-1)/2}$ for $i=1,\ldots,s$ and $(p_1\cdots p_s-1)/2$ being even,
or $2q^{b'} p_1\cdots p_s$ with $\left(\dfrac{p_i}{q}\right)=(-1)^{(p_i-1)/2}$ for $i=1,\ldots,s$ and $(p_1\cdots p_s-1)/2$ being odd.
\end{enumerate}
Note that case $q\equiv7\pmod8$ does not occur because $\|\boldsymbol{a}\|^2=a_1^2+a_2^2+a_3^2$ is the sum of three squares.
\end{enumerate}
\label{thm3}
\end{theorem}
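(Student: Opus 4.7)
The plan is to derive Theorem~\ref{thm3} from Lemma~\ref{lemma1} by computing the Hilbert symbols explicitly in terms of the prime factorizations of $\|\boldsymbol{a}\|^2$ and $\tan^2\theta$. Since $(a,b)_p$ depends only on $a,b$ modulo squares in $\mathbb{Q}_p^*$, I can replace $\|\boldsymbol{a}\|^2$ by its squarefree part $\sigma\in\{1,2,q\}$ (according to the three cases of the theorem) and $\tan^2\theta$ by its squarefree part, which I parametrize as $N=2^b q^{b'} p_1\cdots p_s$ with $p_i$ distinct odd primes different from $q$. Lemma~\ref{lemma1} then demands $(\sigma,\sigma)_p(\sigma,N)_p(N,N)_p=1$ at every odd prime~$p$, and only primes in $\{q,p_1,\dots,p_s\}$ can produce a nontrivial contribution, so the criterion reduces to finitely many conditions.

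The main computational tool is the standard formula $(a,b)_p=(-1)^{\alpha\beta(p-1)/2}(u/p)^{\beta}(v/p)^{\alpha}$ for odd $p$, where $a=p^\alpha u$ and $b=p^\beta v$ with $u,v$ $p$-adic units. Evaluated at $p=p_i$ this gives $(\sigma,\sigma)_{p_i}=1$, $(\sigma,N)_{p_i}=(\sigma/p_i)$, and $(N,N)_{p_i}=(-1)^{(p_i-1)/2}$, so the condition collapses to $(\sigma/p_i)=(-1)^{(p_i-1)/2}$. In Case~1 ($\sigma=1$) this is just $p_i\equiv 1\pmod 4$; in Case~2 ($\sigma=2$) the identity $(2/p_i)=(-1)^{(p_i^2-1)/8}$ turns it into $p_i\equiv 1,3\pmod 8$; and in Case~3 ($\sigma=q$) I would apply quadratic reciprocity to rewrite the condition as $(p_i/q)=1$ when $q\equiv 3\pmod 4$ and as $(p_i/q)=(-1)^{(p_i-1)/2}$ when $q\equiv 1\pmod 4$.

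Case~3 also requires analysis at $p=q$. A direct computation yields $(q,q)_q=(-1)^{(q-1)/2}$ and $(q,N)_q(N,N)_q=(2^b p_1\cdots p_s/q)$, so the condition at $q$ reads $(-1)^{(q-1)/2}(2/q)^b(p_1\cdots p_s/q)=1$. Substituting $(2/q)=(-1)^{(q^2-1)/8}$ and splitting by $q\bmod 8$ then yields the three subcases: $q\equiv 3\pmod 8$ forces $b=1$ with $b'$ free, $q\equiv 1\pmod 8$ forces $(p_1\cdots p_s/q)=1$ which combined with the per-$p_i$ condition becomes the parity requirement ``$(p_1\cdots p_s-1)/2$ even,'' and $q\equiv 5\pmod 8$ links $b$ to that same parity, producing the two-branch statement of part~\ref{thm3.1-3-3}. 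I expect the main obstacle to be this last case, where the cleanest reformulation requires identifying the parity of the number of $p_i\equiv 3\pmod 4$ with $(p_1\cdots p_s-1)/2\bmod 2$; no separate analysis at $p=2$ is needed, since the product formula combined with positivity of both arguments at the infinite place makes the $p=2$ symbol automatically consistent with the odd-prime conditions.
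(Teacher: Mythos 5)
Your proposal is correct and follows essentially the same route as the paper: invoke Lemma~\ref{lemma1}, reduce both $\|\boldsymbol{a}\|^2$ and $\tan^2\theta$ to their square-free parts, evaluate the Hilbert symbols at the only relevant odd primes $p_i$ (and $q$ in case~3) via the explicit formula, quadratic reciprocity, and the supplementary laws, and then split case~3 by $q\bmod 8$; your unified per-$p_i$ condition $(\sigma/p_i)=(-1)^{(p_i-1)/2}$ and the identification of the parity of $\#\{i:p_i\equiv3\pmod4\}$ with $(p_1\cdots p_s-1)/2\bmod 2$ both match the paper's computations. No gaps.
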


In statements (\ref{thm3.1-3-2}) and (\ref{thm3.1-3-3}), even or odd $(p_1\cdots p_s-1)/2$ indicates that there exists an even or odd number of primes $p_i\equiv3\pmod4$, respectively.

\begin{proof}
%We examine the property of the prime factors $p_1,\ldots,p_s$ of $\tan^2\theta$ using Eq.~\eqref{eq2.5}, for each case of $\|\boldsymbol{a}\|^2$.

(1) Equation~\eqref{eq2.5} holds trivially for every prime other than $p_1,\ldots,p_s$; therefore, we have to check the case $p=p_i$.
When $\|\boldsymbol{a}\|^2=M^2$, the two Hilbert symbols in Eq.~\eqref{eq2.5} are $(\|\boldsymbol{a}\|^2, \|\boldsymbol{a}\|^2)_{p_i}=(\|\boldsymbol{a}\|^2, \tan^2\theta)_{p_i}=1$ for each $i=1,\ldots,s$.
Hence, $\theta\in\Theta_3(\boldsymbol{a})$ is equivalent to satisfying $(\tan^2\theta, \tan^2\theta)_{p_i}=1$ for $i=1,\ldots,s$.
This condition is further reduced to $(p_i, p_i)_{p_i}=1$, indicating that $p_i\equiv1\pmod4$.

(2) As with the above case (1), we check Eq.~\eqref{eq2.5} for $p=p_i$.
When $\|\boldsymbol{a}\|^2=2M^2$, $(\|\boldsymbol{a}\|^2, \|\boldsymbol{a}\|^2)_p=1$ for every odd prime $p$.
Hence,
\begin{align*}
(\|\boldsymbol{a}\|^2, \|\boldsymbol{a}\|^2)_{p_i}(\|\boldsymbol{a}\|^2, \tan^2\theta)_{p_i} (\tan^2\theta, \tan^2\theta)_{p_i}
&=(2, p_i)_{p_i} (p_i, p_i)_{p_i}\\
&=\begin{cases}
+1 & p_i\equiv1,3\pmod8\\
-1 & p_i\equiv5,7\pmod8.
\end{cases}
\end{align*}
This expression is found in case~\ref{case4} in the proof of Theorem~\ref{thm1}(\ref{thm1.3-2}).

(3) We have to write Eq.~\eqref{eq2.5} for $p=p_i$ and $p=q$ cases.

If the square-free part of $\tan^2\theta$ is coprime to $q$ ($b'=0$), then Eq.~\eqref{eq2.5} becomes
\begin{equation}
(q,q)_q (q, 2^b p_1\cdots p_s)_q=1\quad(\text{for $p=q$}), \quad
(q, p_i)_{p_i} (p_i, p_i)_{p_i} = 1 \quad(\text{for $p=p_i$}).
\label{eq6}
\end{equation}
Otherwise, if the square-free part of $\tan^2\theta$ includes $q$ ($\tan^2\theta=2^b q p_1\cdots p_s x^2$, where $x$ is a rational number),
Eq.~\eqref{eq2.5} becomes
\[
(q, 2^bq p_1\cdots p_s)_q=1\quad(\text{for $p=q$}), \quad
(q, p_i)_{p_i} (p_i, p_i)_{p_i} = 1 \quad(\text{for $p=p_i$}),
\]
which is the same as in Eq.~\eqref{eq6}.
Therefore, we calculate Eq.~\eqref{eq6}, regardless of whether the square-free part of $\tan^2\theta$ is coprime to $q$.
By using the properties of the Hilbert symbol, Eq.~\eqref{eq6} becomes
\begin{align}
&(-1)^{(q-1)/2}\left(\frac{2^b}{q}\right)\left(\frac{p_1}{q}\right)\cdots\left(\frac{p_s}{q}\right)=1,\label{eq7}\\
&(-1)^{(p_i-1)(q+1)/4}\left(\frac{p_i}{q}\right)=1,\label{eq8}
\end{align}
where the quadratic reciprocity law~\cite{Serre}
\[
\left(\frac{q}{p_i}\right)=\left(\frac{p_i}{q}\right)(-1)^{(p_i-1)(q-1)/4}
\]
is used to derive Eq.~\eqref{eq8}.

We further calculate Eqs.~\eqref{eq7} and \eqref{eq8} according to the residue of $q$ divided by $8$.
\begin{enumerate}
\renewcommand{\labelenumi}{(\alph{enumi})}
\item When $q\equiv3\pmod8$, Eq.~\eqref{eq8} becomes $\left(\displaystyle\frac{p_i}{q}\right)=1$, and Eq.~\eqref{eq7} is reduced to
\[
\left(\frac{2^b}{q}\right)=-1.
\]
$b=0$ does not satisfy this equation, whereas $b=1$ satisfies this equation for any $q\equiv3\pmod8$ because
\[
\left(\frac{2}{q}\right)=(-1)^{(q^2-1)/8}.
\]
\item When $q\equiv1\pmod8$, Eq.~\eqref{eq8} becomes
\[
\left(\frac{p_i}{q}\right)=(-1)^{(p_i-1)/2}.
\]
In this case, $(-1)^{(q-1)/2}=1$ and $\left(\dfrac{2^0}{q}\right)=\left(\dfrac{2^1}{q}\right)=1$.
Hence, Eq.~\eqref{eq7} is equivalent to
\[
(-1)^{(p_1-1)/2}\cdots(-1)^{(p_s-1)/2}=1,
\]
indicating that there are an even number of $i$ such that $p_i\equiv3\pmod4$.
\item When $q\equiv5\pmod8$, Eq.~\eqref{eq8} becomes
\[
\left(\frac{p_i}{q}\right)=(-1)^{(p_i-1)/2},
\]
which is the same as the above case.
However, in this case
$\left(\dfrac{2^0}{q}\right)=1$ and $\left(\dfrac{2^1}{q}\right)=-1$.
By considering this difference, the proof can be conducted in an analogous manner to the above case.
\end{enumerate}
\end{proof}

$\Theta_3(\boldsymbol{a})$ for a more general form of $\|\boldsymbol{a}\|^2$, even for $\|\boldsymbol{a}\|^2=q_1q_2M^2$ with distinct primes $q_1$ and $q_2$, is expected to become more complicated because $q_1$ and $q_2$ affect each other.
The general and adequate characterization of $\Theta_3(\boldsymbol{a})$ is an open problem.
In relation to this problem, the following property is easy to prove, although this is far from the complete determination of $\Theta_3(\boldsymbol{a})$ for general $\|\boldsymbol{a}\|^2$.

\begin{proposition}
Let $\boldsymbol{a}_1, \boldsymbol{a}_2, \boldsymbol{a}_3$, and $\boldsymbol{b}$ be integer vectors that satisfy $\|\boldsymbol{b}\|^2=\|\boldsymbol{a}_1\|^2\|\boldsymbol{a}_2\|^2\|\boldsymbol{a}_3\|^2$.
Any angle $\theta\in\Theta_3(\boldsymbol{a}_1)\cap\Theta_3(\boldsymbol{a}_2)\cap\Theta_3(\boldsymbol{a}_3)$ belongs to $\Theta_3(\boldsymbol{b})$;
that is, $\Theta_3(\boldsymbol{a}_1)\cap\Theta_3(\boldsymbol{a}_2)\cap\Theta_3(\boldsymbol{a}_3)\subset\Theta_3(\boldsymbol{b})$.
\end{proposition}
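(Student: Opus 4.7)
The plan is to translate everything into Hilbert-symbol language via Lemma~\ref{lemma1} and then exploit the bimultiplicativity of $(\cdot,\cdot)_p$ together with the fact that its values are $\pm 1$. Write $A_i=\|\boldsymbol{a}_i\|^2$, $B=\|\boldsymbol{b}\|^2=A_1A_2A_3$ and $T=\tan^2\theta$; the cases $\theta=0,\pi/2,\pi$ are trivial. By Lemma~\ref{lemma1} the hypothesis $\theta\in\Theta_3(\boldsymbol{a}_i)$ for $i=1,2,3$ reads
\[
(A_i,A_i)_p(A_i,T)_p(T,T)_p=1\qquad(i=1,2,3)
\]
for every odd prime $p$, and the conclusion $\theta\in\Theta_3(\boldsymbol{b})$ is the analogous identity with $B$ in place of $A_i$.

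The substantive step is to expand $(B,B)_p$ and $(B,T)_p$. Bilinearity gives $(B,T)_p=\prod_{i}(A_i,T)_p$ directly. For $(B,B)_p=\prod_{i,j}(A_i,A_j)_p$, the off-diagonal factors $(A_i,A_j)_p$ with $i\ne j$ occur in pairs by the symmetry of the Hilbert symbol, hence square to $1$, leaving $(B,B)_p=\prod_{i}(A_i,A_i)_p$.

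Combining these factorizations, the quantity $(B,B)_p(B,T)_p(T,T)_p$ regroups as $\prod_{i=1}^{3}\bigl[(A_i,A_i)_p(A_i,T)_p\bigr]\cdot(T,T)_p$; by hypothesis each bracket equals $(T,T)_p$, so the product collapses to $(T,T)_p^{4}=1$, and Lemma~\ref{lemma1} finishes the argument.

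There is no genuine obstacle beyond this bookkeeping; what deserves comment is why the specific count three is natural. For $k$ such vectors $\boldsymbol{a}_i$ the same manipulation yields $(T,T)_p^{k+1}$, which is automatically $1$ precisely when $k$ is odd. Thus the proposition extends verbatim to any odd number of factors, whereas an even number would additionally require $(T,T)_p=1$, i.e.\ a nontrivial constraint on $\theta$ itself.
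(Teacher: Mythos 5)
Your proof is correct and follows essentially the same route as the paper's: both reduce the claim to the Hilbert-symbol criterion of Lemma~\ref{lemma1} and then use bimultiplicativity together with $(\tan^2\theta,\tan^2\theta)_p^2=1$ to collapse $(\|\boldsymbol{b}\|^2,\|\boldsymbol{b}\|^2)_p(\|\boldsymbol{b}\|^2,\tan^2\theta)_p(\tan^2\theta,\tan^2\theta)_p$ into the product of the three hypothesized identities. Your closing observation that the argument yields $(T,T)_p^{k+1}$ for $k$ vectors, and hence works for any odd $k$, is a correct and worthwhile addition not made explicit in the paper.
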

\begin{proof}
Suppose $\theta\in\bigcap_{i=1}^3\Theta_3(\boldsymbol{a}_i)$ and $\theta\ne0,\pi/2,\pi$.
Using properties of the Hilbert symbol, especially the identity $(\tan^2\theta, \tan^2\theta)_p^2=1$, one obtains
\begin{align*}
(\|\boldsymbol{b}\|^2, \|\boldsymbol{b}\|^2)_p(\|\boldsymbol{b}\|^2, \tan^2\theta)_p(\tan^2\theta, \tan^2\theta)_p\\
&=\prod_{i=1}^3 (\|\boldsymbol{a}_i\|^2, \|\boldsymbol{a}_i\|^2)_p(\|\boldsymbol{a}_i\|^2, \tan^2\theta)_p(\tan^2\theta, \tan^2\theta)_p\\
&=1
\end{align*}
for any odd prime $p$.
\end{proof}

\begin{remark}
This proposition provides only a sufficient condition for $\theta\in\Theta_3(\boldsymbol{b})$.
Although $\theta\not\in\bigcap_{i=1}^3\Theta_3(\boldsymbol{a}_i)$, it is possible that $\theta$ belongs to $\Theta_3(\boldsymbol{b})$.
\end{remark}

Thus far, we have studied $\Theta_3(\boldsymbol{a})$ for a given integer vector $\boldsymbol{a}$.
Conversely, we can also investigate the set of integer vectors that can form a given angle $\theta$ with another integer vector:
$\{\boldsymbol{v}\in\mathbb{Z}^3\mid \angle(\boldsymbol{v}, \boldsymbol{w})=\theta \text{ for some $\boldsymbol{w}\in\mathbb{Z}^3$}\}=\{\boldsymbol{v}\in\mathbb{Z}^3\mid \Theta_3(\boldsymbol{v})\ni\theta\}$.
Since Eq.~\eqref{eq2.5} suggests that the squared norm $\|\boldsymbol{a}\|^2$ is more relevant to $\theta$ than the vector $\boldsymbol{a}$ itself, we introduce
\[
S(\theta):=\{\|\boldsymbol{v}\|^2\mid \boldsymbol{v}\in\mathbb{Z}^3\setminus\{\boldsymbol{0}\}, \Theta_3(\boldsymbol{v})\ni\theta\}.%\cup\{0\}.
\]
Recall that $0, \pi/2, \pi\in\Theta_3(\boldsymbol{v})$ for any integer vector $\boldsymbol{v}\ne\boldsymbol{0}$ (see the beginning of \S\ref{sec2}).
Thus, we immediately have
\[
S(0)=S\left(\frac{\pi}{2}\right)=S(\pi)=\{\|\boldsymbol{v}\|^2\mid \boldsymbol{v}\in\mathbb{Z}^3\setminus\{\boldsymbol{0}\}\}
=\{v_1^2+v_2^2+v_3^2\mid (v_1, v_2, v_3)\in\mathbb{Z}^3\}\setminus\{0\}.
\]
According to the three-square theorem, this is the set of positive integers that are not of the form $4^k(8m+7)$ ($k$ and $m$ are non-negative integers).
Moreover, we introduce the following four sets.
\begin{align*}
&\tilde\Theta_3:=\{\angle(\boldsymbol{a}, \boldsymbol{b})\mid \boldsymbol{a},\boldsymbol{b}\in\mathbb{Q}^3\}\setminus\left\{0,\frac{\pi}{2},\pi\right\},\\
&\tilde\Theta_3(\boldsymbol{a}):=\{\angle(\boldsymbol{a}, \boldsymbol{b})\mid \boldsymbol{b}\in\mathbb{Q}^3\}\setminus\left\{0,\frac{\pi}{2},\pi\right\}, \\%\quad
&\tilde{S}:=\{\|\boldsymbol{v}\|^2\mid \boldsymbol{v}\in\mathbb{Q}^3\setminus\{\boldsymbol{0}\}\},\\%quad
&\tilde{S}(\theta):=\{\|\boldsymbol{v}\|^2\mid \boldsymbol{v}\in\mathbb{Q}^3, \tilde\Theta_3(\boldsymbol{v})\ni\theta\}%\cup\{0\}
\end{align*}
Here, the tilde symbol indicates that the vectors involved have rational components.

\begin{proposition}
Let $\boldsymbol{a}\ne\boldsymbol{0}$ be an integer vector and $\theta\in\Theta_3\setminus\{0,\pi/2,\pi\}$.
\begin{enumerate}
\item\label{prop3.4-1} $\tilde\Theta_3=\Theta_3\setminus\{0,\pi/2,\pi\}$ and $\tilde\Theta_3(\boldsymbol{a})=\Theta_3(\boldsymbol{a})\setminus\{0,\pi/2,\pi\}$.
That is, the difference between $\Theta_3$ and $\tilde\Theta_3$ and between $\Theta_3(\boldsymbol{a})$ and $\tilde\Theta_3(\boldsymbol{a})$ is whether $0, \pi/2$, and $\pi$ are contained.
\item\label{prop3.4-2} $\tilde{S}=\tan^2\tilde\Theta_3:=\{\tan^2\theta\mid \theta\in\tilde\Theta_3\}$.
\item $\tilde{S}(\theta)\subseteq\tilde{S}$.
\item $S(\theta)=\tilde{S}(\theta)\cap\mathbb{Z}$.
\end{enumerate}
\label{prop3.4}
\end{proposition}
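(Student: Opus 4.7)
The approach is to handle the four parts sequentially: parts~\ref{prop3.4-1}, \ref{prop3.4-2}, and (3) are settled by scaling and definitional arguments, while part~(4) requires a classical lemma of Davenport--Cassels together with Lemma~\ref{lemma1}.

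For part~\ref{prop3.4-1}, the inclusions $\Theta_3\setminus\{0,\pi/2,\pi\}\subseteq\tilde\Theta_3$ and $\Theta_3(\boldsymbol{a})\setminus\{0,\pi/2,\pi\}\subseteq\tilde\Theta_3(\boldsymbol{a})$ follow immediately from $\mathbb{Z}^3\subset\mathbb{Q}^3$. For the reverse direction, any nonzero $\boldsymbol{b}\in\mathbb{Q}^3$ admits a positive integer $k$ (a common denominator) with $k\boldsymbol{b}\in\mathbb{Z}^3$, and $\angle(\boldsymbol{a},\boldsymbol{b})=\angle(\boldsymbol{a},k\boldsymbol{b})$, so every angle realised by rational vectors is realised by integer vectors. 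For part~\ref{prop3.4-2}, Beeson's characterization of $\Theta_3$ recalled in the introduction gives $\theta\in\Theta_3\setminus\{\pi/2\}$ iff $\tan^2\theta=(r^2+s^2+t^2)/u^2$, that is, $\tan^2\theta$ is a sum of three rational squares. Combined with part~\ref{prop3.4-1}, this identifies $\tan^2\tilde\Theta_3$ with the set of positive rationals expressible as sums of three rational squares (the value $0$ is excluded since $\tan\theta\ne 0$ on $(0,\pi)\setminus\{\pi/2\}$), and $\tilde S$ is by definition the same set. Part~(3) is immediate from the definitions: any element of $\tilde S(\theta)$ is in particular the squared norm of a nonzero rational $3$-vector and hence lies in $\tilde S$.

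Part~(4) is the main content. The inclusion $S(\theta)\subseteq\tilde S(\theta)\cap\mathbb{Z}$ is immediate from $\mathbb{Z}^3\subset\mathbb{Q}^3$ together with part~\ref{prop3.4-1}. For the reverse, take $r\in\tilde S(\theta)\cap\mathbb{Z}$ and write $r=\|\boldsymbol{v}\|^2$ for a rational $\boldsymbol{v}$ with $\theta\in\tilde\Theta_3(\boldsymbol{v})$; the goal is to produce an integer $\boldsymbol{v}'$ of squared norm exactly $r$ realising $\theta$. First, to produce $\boldsymbol{v}'$ itself I appeal to the Davenport--Cassels lemma: since $r$ is a positive integer that is a sum of three rational squares, it is a sum of three integer squares, giving $\boldsymbol{v}'\in\mathbb{Z}^3$ with $\|\boldsymbol{v}'\|^2=r$. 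Second, to transfer the angle condition I clear denominators of $\boldsymbol{v}$ to obtain $k\boldsymbol{v}\in\mathbb{Z}^3$ of squared norm $k^2r$ with $\theta\in\Theta_3(k\boldsymbol{v})$ (by part~\ref{prop3.4-1}). Lemma~\ref{lemma1} applied to $k\boldsymbol{v}$ yields $(k^2r,k^2r)_p(k^2r,\tan^2\theta)_p(\tan^2\theta,\tan^2\theta)_p=1$ at every odd prime $p$; the invariance of the Hilbert symbol under multiplication of arguments by nonzero squares reduces this to $(r,r)_p(r,\tan^2\theta)_p(\tan^2\theta,\tan^2\theta)_p=1$, and Lemma~\ref{lemma1} applied in the reverse direction to $\boldsymbol{v}'$ then gives $\theta\in\Theta_3(\boldsymbol{v}')$, hence $r\in S(\theta)$.

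The principal obstacle is the $\supseteq$ direction of part~(4). It is precisely here that one needs the nontrivial arithmetic input of Davenport--Cassels to go from a rational realisation of the squared norm to an integer realisation with the exact value of $r$ rather than merely a square multiple of it; and it is the square-invariance of the Hilbert symbol that lets Lemma~\ref{lemma1} carry the angle-attainment condition across integer vectors of different directions whose squared norms differ by the factor $k^2$.
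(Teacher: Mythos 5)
Your proof is correct and follows essentially the same route as the paper's: parts (1)--(3) are argued identically (scaling for (1), Beeson's characterization for (2), definitions for (3)), and for part (4) you likewise clear denominators and produce an integer vector of squared norm exactly $r$. The only differences are minor: you invoke the Davenport--Cassels lemma where the paper deduces the integral three-square representation of $r$ from Legendre's three-square theorem applied to $rd^2$, and you make explicit the transfer of the angle-attainment condition between vectors of squared norms $k^2r$ and $r$ via Lemma~\ref{lemma1} and the square-invariance of the Hilbert symbol --- a step the paper leaves implicit.
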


\begin{proof}
\begin{enumerate}
\item $\tilde\Theta_3\supseteq\Theta_3\setminus\{0, \pi/2, \pi\}$ holds trivially because integer vectors can be regarded as rational vectors.
Conversely, the angle between two vectors remains unchanged if these vectors are multiplied by positive integers.
Any rational vector becomes an integer vector by suitably multiplying an integer.
Hence, $\tilde\Theta_3\subseteq\Theta_3\setminus\{0, \pi/2, \pi\}$ holds.

A similar argument applies to $\Theta_3(\boldsymbol{a})$ and $\tilde\Theta_3(\boldsymbol{a})$.

\item $\tilde{S}$ is the set of positive rational numbers represented in the sum of three squared rationals.
Meanwhile, from Beeson's theorem~\cite{Beeson}, $\tilde\Theta_3$ is the set of angles whose squared tangent is represented in the sum of three squared rationals.
Hence, $\tilde{S}=\tan^2\tilde\Theta_3$.
\item Trivial by the definitions of $\tilde{S}$ and $\tilde{S}(\theta)$.
\item Since an integer vector is a kind of rational vector, $S(\theta)\subseteq\tilde{S}(\theta)$.
Taking the intersection with $\mathbb{Z}$, we have $S(\theta)\subseteq\tilde{S}(\theta)\cap\mathbb{Z}$, because $S(\theta)\cap\mathbb{Z}=S(\theta)$.

Conversely, we take an arbitrary element $N\in\tilde{S}(\theta)\cap\mathbb{Z}$.
By definition, there exists a rational vector $\boldsymbol{v}=(v_1, v_2, v_3)$ such that $v_1^2+v_2^2+v_3^2=N$.
We let $d$ be the lowest common denominator of $v_1, v_2$, and $v_3$; we write $v_i=w_i/d$ with $w_i\in\mathbb{Z}$ for $i=1,2,3$.
Immediately, $w_1^2+w_2^2+w_3^2=Nd^2$, and $N$ is found to be not of the form $4^k(8m+7)$, owing to the three-square theorem.
Thus, there exists an integer vector $\boldsymbol{a}=(a_1,a_2,a_3)$ such that $\|\boldsymbol{a}\|^2=a_1^2+a_2^2+a_3^2=N\in S(\theta)$.
\end{enumerate}
\end{proof}

\begin{lemma}[Duality of $\tilde{S}(\theta)$ and $\tan^2\tilde\Theta_3(\boldsymbol{a})$]
When the square-free part of $\|\boldsymbol{a}\|^2$ is equal to that of $\tan^2\theta$, we have that
\[
\tilde{S}(\theta)=\tan^2\tilde\Theta_3(\boldsymbol{a}):=\{\tan^2\theta\mid \theta\in\tilde\Theta_3(\boldsymbol{a})\}.
\]
\label{lemma2}
\end{lemma}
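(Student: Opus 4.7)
The plan is to exhibit a single Hilbert-symbol condition on a positive rational $X$ that characterizes both $X\in\tilde{S}(\theta)$ and $X\in\tan^2\tilde\Theta_3(\boldsymbol{a})$, using the observation that at odd primes the Hilbert symbol $(\cdot,\cdot)_p$ depends only on the square-free parts of its entries.

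First I will extend Lemma~\ref{lemma1} to rational vectors: for every nonzero $\boldsymbol{c}\in\mathbb{Q}^3$ and every $\phi\in\tilde\Theta_3$, $\phi\in\tilde\Theta_3(\boldsymbol{c})$ if and only if
\[
(\|\boldsymbol{c}\|^2,\|\boldsymbol{c}\|^2)_p(\|\boldsymbol{c}\|^2,\tan^2\phi)_p(\tan^2\phi,\tan^2\phi)_p=1
\]
for every odd prime $p$. This reduces to the integer case by scaling $\boldsymbol{c}$ by a positive integer $d$ that clears denominators: Proposition~\ref{prop3.4}\ref{prop3.4-1} gives $\tilde\Theta_3(\boldsymbol{c})=\tilde\Theta_3(d\boldsymbol{c})=\Theta_3(d\boldsymbol{c})\setminus\{0,\pi/2,\pi\}$, while $\|d\boldsymbol{c}\|^2=d^2\|\boldsymbol{c}\|^2$ differs from $\|\boldsymbol{c}\|^2$ only by a square factor, so the Hilbert symbols in the criterion of Lemma~\ref{lemma1} are unchanged. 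In particular, the Remark following Lemma~\ref{lemma1} extends to the conclusion that $\tilde\Theta_3(\boldsymbol{c})$ depends only on $\|\boldsymbol{c}\|^2$.

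Let $S$ denote the common square-free part of $\|\boldsymbol{a}\|^2$ and $\tan^2\theta$. Because odd-prime Hilbert symbols are insensitive to square factors, $(\|\boldsymbol{a}\|^2,X)_p=(\tan^2\theta,X)_p=(S,X)_p$ and $(\|\boldsymbol{a}\|^2,\|\boldsymbol{a}\|^2)_p=(\tan^2\theta,\tan^2\theta)_p=(S,S)_p$ for every rational $X$ and every odd prime $p$. Applying the rationalized lemma symmetrically, I will show that for any positive rational $X$, each of the memberships $X\in\tilde{S}(\theta)$ and $X\in\tan^2\tilde\Theta_3(\boldsymbol{a})$ is equivalent to the same pair of conditions: $X\in\tilde{S}$ together with $(X,X)_p(X,S)_p(S,S)_p=1$ for every odd prime $p$. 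On the $\tilde{S}(\theta)$ side, $X\in\tilde{S}$ records the existence of a rational $\boldsymbol{v}$ with $\|\boldsymbol{v}\|^2=X$, and the Hilbert identity records $\theta\in\tilde\Theta_3(\boldsymbol{v})$ (independent of the choice of such $\boldsymbol{v}$). On the $\tan^2\tilde\Theta_3(\boldsymbol{a})$ side, Proposition~\ref{prop3.4}\ref{prop3.4-2} translates $X\in\tilde{S}$ into the existence of $\phi\in\tilde\Theta_3$ with $\tan^2\phi=X$, and the same Hilbert identity records $\phi\in\tilde\Theta_3(\boldsymbol{a})$.

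The main obstacle I anticipate is the rational extension of Lemma~\ref{lemma1} in the first step; once that is in place, the symmetry of the Hilbert-symbol expression in its two nontrivial arguments under the replacement $\|\boldsymbol{a}\|^2\leftrightarrow\tan^2\theta$ (both having square-free part $S$) forces the two memberships to collapse to the same subset of $\tilde{S}$, and the lemma follows.
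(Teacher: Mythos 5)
Your proposal is correct and follows essentially the same route as the paper: both apply the Hilbert-symbol criterion of Lemma~\ref{lemma1}, use the equality of square-free parts to interchange $\|\boldsymbol{a}\|^2$ and $\tan^2\theta$ inside the symbols, and invoke $\tilde{S}=\tan^2\tilde\Theta_3$ from Proposition~\ref{prop3.4} to pass between norms and angles. The only difference is presentational: you reduce both memberships to one symmetric condition $(X,X)_p(X,S)_p(S,S)_p=1$ rather than proving two inclusions, and you make explicit the extension of Lemma~\ref{lemma1} to rational vectors, a step the paper uses implicitly.
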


\begin{proof}
From the condition that the square-free part of $\|\boldsymbol{a}\|^2$ is equal to that of $\tan^2\theta$,
\begin{equation}
(\|\boldsymbol{a}\|^2, A)_p = (\tan^2\theta, A)_p
\label{eq9}
\end{equation}
for every prime $p$ and any non-zero rational number $A$.
For any rational vector $\boldsymbol{v}\in\mathbb{Q}^3$ such that $\|\boldsymbol{v}\|^2\in\tilde{S}(\theta)$, we have $\|\boldsymbol{v}\|^2\in \tilde{S}(\theta)\subseteq\tilde{S}=\tan^2\tilde\Theta_3$ by Proposition~\ref{prop3.4}.
Hence, there exists $\alpha\in\tilde\Theta_3$ such that $\|\boldsymbol{v}\|^2=\tan^2\alpha$.

From Lemma~\ref{lemma1}, $\|\boldsymbol{v}\|^2\in\tilde{S}(\theta)$ is equivalent to
%Taking an arbitrary element $X\in\tan^2\tilde\Theta_3(\boldsymbol{a})$, there exists an angle $\alpha\in\Theta_3(\boldsymbol{a})$ such that $X=\tan^2\alpha$.
%From Eq.~\eqref{eq2.5}, we have
\[
(\|\boldsymbol{v}\|^2, \|\boldsymbol{v}\|^2)_p(\|\boldsymbol{v}\|^2,\tan^2\theta)_p(\tan^2\theta, \tan^2\theta)_p=1
\]
for any odd prime $p$.
Using $\|\boldsymbol{v}\|^2=\tan^2\alpha$ and Eq.~\eqref{eq9}, we obtain
\[
(\|\boldsymbol{a}\|^2, \|\boldsymbol{a}\|^2)_p(\|\boldsymbol{a}\|^2, \tan^2\alpha)_p(\tan^2\alpha, \tan^2\alpha)_p=1.
\]
Therefore, $\|\boldsymbol{v}\|^2=\tan^2\alpha\in\tan^2\tilde\Theta_3(\boldsymbol{a})$, and we conclude that $\tilde{S}(\theta)\subseteq\tan^2\tilde\Theta_3(\boldsymbol{a})$.

We can prove the converse, $\tilde{S}(\theta)\supseteq\tan^2\tilde\Theta_3(\boldsymbol{a})$, in a similar manner as above.
\end{proof}

As a dual statement of Theorem~\ref{thm3}, we obtain the condition for integer vector $\boldsymbol{a}$ to form a given angle $\theta$ with another integer vector.
\begin{theorem}
Let $q_1,\ldots,q_t$ be odd primes and $b,b'\in\{0,1\}$.
\begin{enumerate}
\item\label{thm3.6-1} When $\tan^2\theta$ is a squared rational, $\Theta_3(\boldsymbol{a})\ni\theta$ if and only if the square-free part of $\|\boldsymbol{a}\|^2$ is of the form $2^bq_1\cdots q_t$, with $q_j\equiv1\pmod4$.

\item When the square-free part of $\tan^2\theta$ is $2$ \textup(i.e., $\tan^2\theta=2x^2$ for some non-zero rational number $x$\textup), $\Theta_3(\boldsymbol{a})\ni\theta$ if and only if the square-free part of $\|\boldsymbol{a}\|^2$ is of the form $2^b q_1\cdots q_t$, with $q_j\equiv1,3\pmod8$.

\item\label{thm3.6-3} When the square-free part of $\tan^2\theta$ is odd prime $p$ \textup(i.e., $\tan^2\theta=px^2$ for some non-zero rational number $x$\textup), the condition for $\Theta_3(\boldsymbol{a})\ni\theta$ is written according to the residue of $p$ by 8.
\begin{enumerate}
\item $p\equiv3\pmod8$\textup: $\Theta_3(\boldsymbol{a})\ni\theta$ if and only if the square-free part of $\|\boldsymbol{a}\|^2$ is of the form $2p^bq_1\cdots q_t$, with $\left(\dfrac{q_j}{p}\right)=1$ for $j=1,\ldots,t$.
\item $p\equiv1\pmod8$\textup: $\Theta_3(\boldsymbol{a})\ni\theta$ if and only if the square-free part of $\|\boldsymbol{a}\|^2$ is of the form $2^bp^{b'}q_1\cdots q_t$, with $\left(\dfrac{q_j}{p}\right)=(-1)^{(q_j-1)/2}$ for $j=1,\ldots,t$ and $(q_1\cdots q_t-1)/2$ being even.
\item $p\equiv5\pmod8$\textup: $\Theta_3(\boldsymbol{a})\ni\theta$ if and only if the square-free part of $\|\boldsymbol{a}\|^2$ is of the form $p^b q_1\cdots q_t$, with $\left(\dfrac{q_j}{p}\right)=(-1)^{(q_j-1)/2}$ for $j=1,\ldots,t$ and $(q_1\cdots q_t-1)/2$ being even, or $2p^b q_1\cdots q_t$, with $\left(\dfrac{q_j}{p}\right)=(-1)^{(q_j-1)/2}$ for $j=1,\ldots,t$ and $(q_1\cdots q_t)/2$ being odd.
\end{enumerate}
\end{enumerate}
\label{thm4}
\end{theorem}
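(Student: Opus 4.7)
The plan is to deduce Theorem~\ref{thm4} from Theorem~\ref{thm3} by exploiting the manifest symmetry of the Hilbert-symbol criterion in Lemma~\ref{lemma1}: the equation
\[
(\|\boldsymbol{a}\|^2, \|\boldsymbol{a}\|^2)_p \, (\|\boldsymbol{a}\|^2, \tan^2\theta)_p \, (\tan^2\theta, \tan^2\theta)_p = 1
\]
is invariant under the swap $\|\boldsymbol{a}\|^2 \leftrightarrow \tan^2\theta$, and every Hilbert symbol depends only on the square-free parts of its arguments. Hence specifying the form of the square-free part of $\tan^2\theta$ and asking which square-free parts of $\|\boldsymbol{a}\|^2$ satisfy the criterion is the literal mirror image of Theorem~\ref{thm3}, where the roles were reversed.

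I would then proceed case-by-case, matching each item of Theorem~\ref{thm4} with the correspondingly numbered item of Theorem~\ref{thm3} under this variable swap. For instance, in case~\ref{thm3.6-1}, $\tan^2\theta$ is a squared rational, so $(\tan^2\theta,\cdot)_p=1$ trivially and Eq.~\eqref{eq2.5} collapses to $(\|\boldsymbol{a}\|^2,\|\boldsymbol{a}\|^2)_p=1$ for every odd $p$; unwinding this via the same Hilbert-symbol formulas used in the derivation of Theorem~\ref{thm3}(1) forces every odd prime $q_j$ in the square-free part of $\|\boldsymbol{a}\|^2$ to satisfy $q_j\equiv 1\pmod 4$. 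Cases~(2) and~(3) are handled by the identical swap: the residues of odd primes modulo~$8$ governing $(2,p)_p$ and $(p,p)_p$ enter the criterion in exactly the same way regardless of whether the prime in question appears in $\|\boldsymbol{a}\|^2$ or in $\tan^2\theta$, and the reductions via Eqs.~\eqref{eq7} and~\eqref{eq8} proceed verbatim.

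One subtlety worth addressing is the three-square constraint: since $\theta\in\Theta_3$, the quantity $\tan^2\theta$ must be a sum of three squared rationals, so when its square-free part is an odd prime $p$, Legendre's three-square theorem forces $p\not\equiv 7\pmod 8$, leaving exactly the residues $p\equiv 1,3,5\pmod 8$ enumerated in case~\ref{thm3.6-3} — the perfect analogue of the remark ``case $q\equiv 7\pmod 8$ does not occur'' in Theorem~\ref{thm3}(3). The only genuine bookkeeping task is verifying that the parity conditions on $(p_1\cdots p_s-1)/2$ appearing in Theorem~\ref{thm3}(3b,3c) translate into the corresponding parity conditions on $(q_1\cdots q_t-1)/2$ in Theorem~\ref{thm4}(3b,3c); this is immediate once one observes that the calculations behind Eqs.~\eqref{eq7} and~\eqref{eq8} are symmetric under the swap, so that the primes appearing in $\tan^2\theta$ in Theorem~\ref{thm3} simply relabel as primes appearing in $\|\boldsymbol{a}\|^2$ in Theorem~\ref{thm4}, and no genuine new case analysis is required.
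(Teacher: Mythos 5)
Your proposal is correct and is essentially the paper's own argument: the paper also deduces Theorem~\ref{thm4} from Theorem~\ref{thm3} by swapping the roles of $\|\boldsymbol{a}\|^2$ and $\tan^2\theta$ in the Hilbert-symbol criterion of Lemma~\ref{lemma1}, merely packaging that symmetry as the duality Lemma~\ref{lemma2} together with Proposition~\ref{prop3.4} (so that $S(\theta)=\tan^2\tilde\Theta_3(\boldsymbol{v})\cap\mathbb{Z}$ for a vector $\boldsymbol{v}$ whose squared norm has the same square-free part as $\tan^2\theta$), rather than invoking the symmetry of Eq.~\eqref{eq2.5} directly as you do. Your handling of the three-square constraint ($p\not\equiv7\pmod 8$) matches the paper's remark, and no substantive step is missing.
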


\begin{proof}
For a given $\theta\in\Theta_3$, let $\boldsymbol{v}$ be an integer vector whose square-free part is equal to that of $\tan^2\theta$.
Using Lemma~\ref{lemma2} and Proposition~\ref{prop3.4}, we obtain
\[
S(\theta)=\tilde{S}(\theta)\cap\mathbb{Z}=\tan^2\tilde\Theta_3(\boldsymbol{v})\cap\mathbb{Z}.
\]
Hence, as a result of duality, finding an integer vector $\boldsymbol{a}$ such that $\|\boldsymbol{a}\|^2\in S(\theta)$, namely $\Theta_3(\boldsymbol{a})\ni\theta$, is equivalent to finding the angle $\alpha\in\Theta_3(\boldsymbol{v})$ whose tangent squared $\tan^2\alpha$ is a positive integer.
Each case in the statement corresponds to that in Theorem~\ref{thm3}.
\end{proof}

\begin{remark}
The angles $\pi/6$ ($=30^\circ$), $\pi/4$ ($=45^\circ$), and $\pi/3$ ($=60^\circ$) are fundamentally important to trigonometry.
As a direct application of Theorem~\ref{thm4}, we can explicitly write the conditions for an integer vector $\boldsymbol{a}$ to have an integer vector intersecting at these angles.
\begin{enumerate}
\item Since $\tan^2 45^\circ=1$ is a perfect square, Theorem~\ref{thm4}(\ref{thm3.6-1}) is directly applied.
Thus, $\Theta_3(\boldsymbol{a})\ni45^\circ$ if and only if the square-free part of $\|\boldsymbol{a}\|^2$ is of the form $q_1\cdots q_t$ or $2q_1\cdots q_t$ with $q_j\equiv1\pmod4$ for $j=1,\ldots,t$.
\item For $\theta=30^\circ$ and $60^\circ$, the square-free part of $\tan^2\theta$ is $3$ for both angles ($\tan^2 60^\circ=3$ and $\tan^2 30^\circ=1/3$), which falls into the case $p\equiv3\pmod8$ of Theorem~\ref{thm4}(\ref{thm3.6-3}).
The Legendre symbol in this case becomes
\[
\left(\frac{q}{3}\right)
=\begin{cases} 1 & q\equiv1\pmod3,\\ -1 & q\equiv2\pmod3.\end{cases}
\]
Thus, $\Theta_3(\boldsymbol{a})\ni 30^\circ, 60^\circ$ if and only if its square-free part $\|\boldsymbol{a}\|^2$ is of the form $2q_1\cdots q_t$ or $2\cdot3q_1\cdots q_t$ with $q_j\equiv1\pmod3$ for $j=1,\ldots,t$.
\end{enumerate}
Thus, whether $\Theta_3(\boldsymbol{a})$ contains $\pi/6, \pi/4$, or $\pi/3$ can be determined simply and easily by the residues of the square-free factors of $\|\boldsymbol{a}\|^2$.
\end{remark}

Another application of duality (Lemma~\ref{lemma2}) is the intersection of $\Theta_3(\boldsymbol{a})$ with all integer vectors $\boldsymbol{a}\ne\boldsymbol{0}$.
As shown in the beginning of the proof of Theorem~\ref{thm1} in \S\ref{sec2}, each integer vector $\boldsymbol{a}\ne\boldsymbol{0}$ satisfies $\Theta_3(\boldsymbol{a})\ni 0, \pi/2, \pi$. 
Meanwhile, the following proposition states that no angles other than $0, \pi/2$, and $\pi$ belong to $\Theta_3(\boldsymbol{a})$ for all $\boldsymbol{a}\ne\boldsymbol{0}$.
\begin{proposition}
\[
\bigcap_{\substack{\boldsymbol{a}\in\mathbb{Z}^3\\ \boldsymbol{a}\ne\boldsymbol{0}^{(3)}}} \Theta_3(\boldsymbol{a}) = \left\{0,\frac{\pi}{2},\pi\right\}
\]
\label{prop3.8}
\end{proposition}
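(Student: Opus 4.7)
The inclusion $\supseteq$ was noted already (every nonzero $\boldsymbol{a}$ satisfies $0,\pi/2,\pi\in\Theta_3(\boldsymbol{a})$). For the reverse inclusion, I must show that for every $\theta\in\Theta_3\setminus\{0,\pi/2,\pi\}$ there is a nonzero integer vector $\boldsymbol{a}$ with $\theta\notin\Theta_3(\boldsymbol{a})$. The plan is to feed the nontrivial direction of Theorem~\ref{thm1}\ref{thm1.3-2} through the duality of Lemma~\ref{lemma2} in order to convert the ``for each vector, a bad angle'' statement into its ``for each angle, a bad vector'' dual.

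Fix such a $\theta$ and let $d$ be the square-free part of $\tan^2\theta$. The first task is to manufacture an anchor integer vector $\boldsymbol{v}$ whose squared norm has the same square-free part $d$. For this I would verify that $d$ is itself a sum of three integer squares, equivalently $d\not\equiv 7\pmod 8$: clearing denominators in $\tan^2\theta=d(p/q)^2$ shows that $dk^2$ is a sum of three integer squares for some positive integer $k$, but if $d\equiv 7\pmod 8$ a short mod-$8$ calculation forces $dk^2=4^j(8m+7)$, contradicting the three-square theorem. Take $\boldsymbol{v}\in\mathbb{Z}^3$ with $\|\boldsymbol{v}\|^2=d$, whose square-free part agrees with that of $\tan^2\theta$; Lemma~\ref{lemma2} then yields $\tilde{S}(\theta)=\tan^2\tilde\Theta_3(\boldsymbol{v})$.

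Next, Theorem~\ref{thm1}\ref{thm1.3-2} applied to $\boldsymbol{v}$ gives $\Theta_3(\boldsymbol{v})\subsetneq\Theta_3$. Combining this with Proposition~\ref{prop3.4}\ref{prop3.4-1},\ref{prop3.4-2} and the observation that both $\tilde\Theta_3(\boldsymbol{v})$ and $\tilde\Theta_3$ are closed under $\alpha\mapsto\pi-\alpha$ (so that $\tan^2$, whose fibers are $\{\alpha,\pi-\alpha\}$, preserves the strict inclusion), I obtain $\tilde{S}(\theta)\subsetneq\tilde{S}$. Pick any $r\in\tilde{S}\setminus\tilde{S}(\theta)$; because rational scaling of a vector preserves its angle set and rescales the squared norm by a rational square, both $\tilde{S}$ and $\tilde{S}(\theta)$ are unions of square classes in $\mathbb{Q}^*/(\mathbb{Q}^*)^2$, so I may replace $r$ by its square-free part $d'$ with $d'\in\tilde{S}\setminus\tilde{S}(\theta)$. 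Repeating the mod-$8$ argument shows $d'$ is a sum of three integer squares; taking $\boldsymbol{a}\in\mathbb{Z}^3$ with $\|\boldsymbol{a}\|^2=d'$, Proposition~\ref{prop3.4} (the identity $S(\theta)=\tilde{S}(\theta)\cap\mathbb{Z}$) gives $d'\notin S(\theta)$, which is precisely $\theta\notin\Theta_3(\boldsymbol{a})$.

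The main technical subtlety is the passage between rational and integer data at both ends of the argument: Lemma~\ref{lemma2} naturally lives at the rational level while the proposition concerns integer vectors. The same square-free-part reduction together with the three-square theorem handles both crossings; beyond this mild bookkeeping, the proof is a clean chain of applications of the earlier results.
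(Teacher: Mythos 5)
Your proof is correct and follows essentially the same route as the paper: both rely on the duality of Lemma~\ref{lemma2} combined with Theorem~\ref{thm1}\ref{thm1.3-2} to convert ``every vector misses some angle'' into ``every angle is missed by some vector.'' The paper packages this as a single chain of intersection identities at the rational level (passing between rational and integer vectors simply by rescaling directions), whereas you argue pointwise in $\theta$ and handle the rational-to-integer crossings via square-free parts and the three-square theorem --- slightly more laborious, but equally valid.
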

\begin{proof}
From the result $\tilde\Theta_3(\boldsymbol{a})=\Theta_3(\boldsymbol{a})\setminus\{0,\pi/2,\pi\}$ in Proposition~\ref{prop3.4}(\ref{prop3.4-1}), it suffices to show that
\[
\bigcap_{\substack{\boldsymbol{a}\in\mathbb{Z}^3\\ \boldsymbol{a}\ne\boldsymbol{0}^{(3)}}} \tilde\Theta_3(\boldsymbol{a})=\emptyset.
\]
For any rational vector $\boldsymbol{v}$, there exists an integer vector $\boldsymbol{a}$ that has the same direction as $\boldsymbol{v}$.
Hence,
\[
\bigcap_{\substack{\boldsymbol{a}\in\mathbb{Z}^3\\ \boldsymbol{a}\ne\boldsymbol{0}^{(3)}}} \tan^2\tilde\Theta_3(\boldsymbol{a})
=\bigcap_{\substack{\boldsymbol{v}\in\mathbb{Q}^3\\ \boldsymbol{v}\ne\boldsymbol{0}^{(3)}}} \tan^2\tilde\Theta_3(\boldsymbol{v})
=\bigcap_{\|\boldsymbol{v}\|^2\in\tilde{S}} \tan^2\tilde\Theta_3(\boldsymbol{v})
=\bigcap_{\theta\in\tilde\Theta_3} \tilde{S}(\theta),
\]
where Proposition~\ref{prop3.4}(\ref{prop3.4-2}) and Lemma~\ref{lemma2} are used in the last equality.

Suppose that some rational vector $\boldsymbol{v}$ satisfies
\[
\|\boldsymbol{v}\|^2\in\bigcap_{\theta\in\tilde\Theta_3} \tilde{S}(\theta).
\]
This implies that any angle $\theta\in\tilde\Theta_3$ belongs to $\tilde\Theta_3(\boldsymbol{v})$; that is, $\tilde\Theta_3\subseteq\tilde\Theta_3(\boldsymbol{v})$.
This result contradicts Theorem~\ref{thm1}(\ref{thm1.3-2}).
Therefore,
\[
\bigcap_{\substack{\boldsymbol{a}\in\mathbb{Z}^3\\ \boldsymbol{a}\ne\boldsymbol{0}^{(3)}}} \tan^2\tilde\Theta_3(\boldsymbol{a})
=\bigcap_{\theta\in\tilde\Theta_3} \tilde{S}(\theta)=\emptyset.
\]
\end{proof}

From Theorem~\ref{thm1}(\ref{thm1.3-1}), we have
\[
\bigcap_{\substack{\boldsymbol{a}^{(n)}\in\mathbb{Z}^n\\ \boldsymbol{a}^{(n)}\ne\boldsymbol{0}^{(n)}}} \Theta_n(\boldsymbol{a}^{(n)}) = \Theta_n
\]
for $n\ne3$, which is in clear contrast to Proposition~\ref{prop3.8}.
This result also indicates the exceptionality of the three-dimensional lattice.

\section*{Acknowledgments}
The author is grateful to Dr.~Yoshinori Mishiba for the informative discussions and to a referee who suggested a plainer proof of the $n=4$ case of Theorem~\ref{thm1}(\ref{thm1.3-1}).

\end{document}